\definecolor{mahogany}{cmyk}{0, 0.77, 0.87, 0}
\definecolor{salmon}{cmyk}{0, 0.53, 0.38, 0}
\definecolor{melon}{cmyk}{0, 0.46, 0.50, 0}
\definecolor{yellowgreen}{cmyk}{0.44, 0, 0.74, 0}
\definecolor{brickred}{cmyk}{0, 0.89, 0.94, 0.28}
\definecolor{OliveGreen}{cmyk}{0.64, 0, 0.95, 0.40}
\definecolor{RawSienna}{cmyk}{0, 0.72, 1.0, 0.45}
\definecolor{ZurichRed}{rgb}{1, 0, 0} 
\definecolor{rb}{rgb}{0.1,0.2, 0.7}
\begin{document}

\newtheorem{lemma}[thm]{Lemma}
\newtheorem{proposition}{Proposition}
\newtheorem{theorem}{Theorem}[section]
\newtheorem{deff}[thm]{Definition}
\newtheorem{case}[thm]{Case}
\newtheorem{prop}[thm]{Proposition}
\newtheorem{example}{Example}

\newtheorem{corollary}{Corollary}

\theoremstyle{definition}
\newtheorem{remark}{Remark}

\numberwithin{equation}{section}
\numberwithin{definition}{section}
\numberwithin{corollary}{section}

\numberwithin{theorem}{section}

\numberwithin{remark}{section}
\numberwithin{example}{section}
\numberwithin{proposition}{section}

\newcommand{\gap}{\lambda_{2,D}^V-\lambda_{1,D}^V}
\newcommand{\gapR}{\lambda_{2,R}-\lambda_{1,R}}
\newcommand{\bD}{\mathrm{I\! D\!}}
\newcommand{\calD}{\mathcal{D}}
\newcommand{\calA}{\mathcal{A}}

\newcommand{\conjugate}[1]{\overline{#1}}
\newcommand{\abs}[1]{\left| #1 \right|}
\newcommand{\cl}[1]{\overline{#1}}
\newcommand{\expr}[1]{\left( #1 \right)}
\newcommand{\set}[1]{\left\{ #1 \right\}}

\newcommand{\calC}{\mathcal{C}}
\newcommand{\calE}{\mathcal{E}}
\newcommand{\calF}{\mathcal{F}}
\newcommand{\Rd}{\mathbb{R}^d}
\newcommand{\BR}{\mathcal{B}(\Rd)}
\newcommand{\R}{\mathbb{R}}
\newcommand{\T}{\mathbb{T}}
\newcommand{\D}{\mathbb{D}}

\newcommand{\al}{\alpha}
\newcommand{\RR}[1]{\mathbb{#1}}
\newcommand{\bR}{\mathrm{I\! R\!}}
\newcommand{\ga}{\gamma}
\newcommand{\om}{\omega}
\newcommand{\A}{\mathbb{A}}
\newcommand{\bH}{\mathbb{H}}

\newcommand{\bb}[1]{\mathbb{#1}}
\newcommand{\bI}{\bb{I}}
\newcommand{\bN}{\bb{N}}

\newcommand{\uS}{\mathbb{S}}
\newcommand{\M}{{\mathcal{M}}}
\newcommand{\calB}{{\mathcal{B}}}

\newcommand{\W}{{\mathcal{W}}}

\newcommand{\m}{{\mathcal{m}}}

\newcommand {\mac}[1] { \mathbb{#1} }

\newcommand{\bC}{\Bbb C}

\newtheorem{rem}[theorem]{Remark}
\newtheorem{dfn}[theorem]{Definition}
\theoremstyle{definition}
\newtheorem{ex}[theorem]{Example}
\numberwithin{equation}{section}

\newcommand{\Pro}{\mathbb{P}}
\newcommand\F{\mathcal{F}}
\newcommand\E{\mathbb{E}}
\newcommand\e{\varepsilon}
\def\H{\mathcal{H}}
\def\t{\tau}

\title[Martingale inequalities]{A dual approach to Burkholder's $L^p$ estimates}

\author{Rodrigo Ba\~nuelos}
\thanks{R. Ba\~nuelos is supported in part  by NSF Grant  DMS-1854709}
\address{Department of Mathematics, Purdue University, West Lafayette, IN 47907, USA}
\email{banuelos@math.purdue.edu}

\author{Tomasz Ga\l \k{a}zka}
\address{Faculty of Mathematics, Informatics and Mechanics\\
 University of Warsaw\\
Banacha 2, 02-097 Warsaw\\
Poland}
\email{T.Galazka@mimuw.edu.pl}

\author{Adam Os\k{e}kowski}
\thanks{A. Os\k{e}kowski is supported by Narodowe Centrum Nauki (Poland), grant no. DEC-2014/14/E/ST1/00532.}\address{Faculty of Mathematics, Informatics and Mechanics\\
 University of Warsaw\\
Banacha 2, 02-097 Warsaw\\
Poland}
\email{A.Osekowski@mimuw.edu.pl}

\keywords{Bellman function; best constants; differential subordination; martingale.}
\subjclass[2010]{Primary: 60G42, 60G44.}

\begin{abstract}
The paper contains an alternative proof of the celebrated $L^p$ estimates for differentially subordinate martingales established by Burkholder and Wang in the eighties and nineties. The approach links the validity of the estimate to the existence of a lower solution to a novel boundary value problem.
\end{abstract}

\maketitle

\section{Introduction}
Suppose that  $(\Omega,\mathcal{F},\mathbb{P})$ is a probability space, filtered by nondecreasing family $(\F_n)_{n\geq 0}$ of sub-$\sigma$-algebras of $\F$. Let $f=(f_n)_{n\geq 0}$, $g=(g_n)_{n\geq 0}$ be two martingales adapted to $(\mathcal{F}_n)$, taking values in a separable Hilbert space $\mathbb{H}$ with the norm $|\cdot|$ and the scalar product denoted by $\langle \cdot,\cdot\rangle$. Actually, we may and do assume that $\mathbb{H}=\ell^2$. Assume further that  $df=(df_n)_{n\geq 0}$, $dg=(dg_n)_{n\geq 0}$ are the difference sequences of $f$ and $g$, uniquely determined by the identities
$$ f_n=\sum_{k=0}^n df_k,\quad g_n=\sum_{k=0}^n dg_k,\quad n=0,\,1,\,2,\,\ldots.$$
Following Burkholder \cite{B0,B1}, we say that $g$ is differentially subordinate to $f$, if we have
\begin{equation}\label{differ}
 |dg_n|\leq |df_n|,\qquad n=0,\,1,\,2,\,\ldots,
\end{equation}
almost surely. For example, this is the case if $g$ is a transform of $f$ by a predictable real sequence $v=(v_n)_{n\geq 0}$, bounded in absolute value by $1$. That is,  for each $n$ we have $dg_n=v_n df_n$, $\|v_n\|_{L^\infty}\leq 1$ and $v_n$ is measurable with respect to $\F_{(n-1)\vee 0}$. 

The problem of comparing the sizes of $f$ and $g$ under the differential subordination has been studied in depth in the literature: we refer the reader to the papers \cite{B1,Os2} and the monograph \cite{Os} for the detailed description of this subject. We will concentrate on the following celebrated $L^p$-estimate, established by Burkholder in \cite{B0}. Here and below, we  use the notation $\|f\|_{L^p}=\sup_{n\geq 0}\|f_n\|_{L^p}$ for the $p$-th moment of a martingale $f$.

\begin{theorem}
Suppose that $1<p<\infty$. Then for any martingales $f$, $g$ such that $g$ is differentially subordinate to $f$ we have
\begin{equation}\label{moment}
||g||_{L^p}\leq (p^*-1) ||f||_{L^p},
\end{equation}
where $p^*=\max\{p,p'\}$ and $p'=p/(p-1)$ is the harmonic conjugate to $p$. The constant $p^*-1$ is the best possible.
\end{theorem}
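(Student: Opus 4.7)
The plan is to use $L^p$--$L^{p'}$ duality to convert \eqref{moment} into a bilinear inequality and then prove that inequality via a Bellman-function argument. Since $\|g_n\|_{L^p}=\sup\{\E\langle g_n,\zeta\rangle:\zeta\in L^{p'}(\mathbb{H}),\,\|\zeta\|_{L^{p'}}\leq 1\}$, and since for every such $\zeta$ the tower property gives $\E\langle g_n,\zeta\rangle=\E\langle g_n,h_n\rangle$ with $h_k=\E[\zeta\mid\F_k]$ an $L^{p'}$-bounded $\mathbb{H}$-valued martingale, it suffices to establish
\begin{equation*}
\E\langle g_n,h_n\rangle\leq\frac{(p^{*}-1)^p}{p}\|f\|_{L^p}^{p}+\frac{1}{p'}\|h\|_{L^{p'}}^{p'}
\end{equation*}
for every pair of $\mathbb{H}$-valued $(\F_k)$-martingales $(f,h)$ with $g$ differentially subordinate to $f$. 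Rescaling $h\mapsto th$ and optimizing in $t>0$ (i.e.\ Young's inequality run in reverse) then returns \eqref{moment} with the sharp constant $p^{*}-1$.

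To obtain the bilinear bound, I would seek a function $U:\mathbb{H}\times\mathbb{H}\times\mathbb{H}\to\R$ (which, by rotational invariance, depends on only a few real parameters) satisfying: (i) the pointwise majorization $U(x,y,z)\geq\langle y,z\rangle-\frac{(p^{*}-1)^p}{p}|x|^p-\frac{1}{p'}|z|^{p'}$; (ii) the boundary bound $U(x,y,z)\leq 0$ whenever $|y|\leq|x|$; and (iii) concavity along every admissible direction, meaning that for any base point $(x,y,z)$ and any $(\dot x,\dot y,\dot z)$ with $|\dot y|\leq|\dot x|$, the map $t\mapsto U((x,y,z)+t(\dot x,\dot y,\dot z))$ is concave near $0$. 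The infinitesimal form of (iii), coupled with the boundary data coming from (i)--(ii), is the "novel boundary value problem" to which the abstract refers; the key new observation, as I read it, is that a \emph{lower} solution to that problem already suffices, instead of the exact Bellman function demanded by Burkholder's classical argument. Given such $U$, the process $M_k=U(f_k,g_k,h_k)$ is a $\p$-supermartingale in view of (iii) and differential subordination, so $\E\,U(f_n,g_n,h_n)\leq\E\,U(f_0,g_0,h_0)\leq 0$ by (ii) and $|g_0|\leq|f_0|$; combining with (i) delivers the bilinear estimate.

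The principal obstacle is the construction of the lower solution $U$ itself---locating the free boundary, writing down a candidate with the exact prefactor $(p^{*}-1)^p/p$, and verifying the concavity condition (iii) in \emph{every} direction compatible with $|\dot y|\leq|\dot x|$. The last verification is traditionally the delicate step of any Bellman-function argument; the point of adding the auxiliary dual variable $z$ is presumably that the extra degrees of freedom make the hard concavity directions easier to control and, simultaneously, reduce the task from exhibiting the exact Bellman function to exhibiting a mere lower barrier for it. Sharpness of the constant $p^{*}-1$ is already known from Burkholder's original extremal constructions, so it need not be revisited.
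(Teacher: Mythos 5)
Your opening move---$L^p$--$L^{p'}$ duality, reducing \eqref{moment} to a bilinear inequality
\begin{equation*}
\E\langle g_n,h_n\rangle\leq \frac{(p^*-1)^p}{p}\|f\|_{L^p}^p+\frac{1}{p'}\|h\|_{L^{p'}}^{p'}
\end{equation*}
for an auxiliary martingale $h$, followed by homogenization---matches the paper's point of departure, and the ambition of proving the bilinear bound by a Bellman-type function is also shared. But the Bellman function you propose is structurally different from the paper's, and the difference is exactly where the paper's contribution lies.

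You seek $U(x,y,z)$ on $\mathbb H\times\mathbb H\times\mathbb H$ with the subordinated variable $y$ riding along as one of the three arguments, subject to (i) a pointwise majorization, (ii) $U\leq 0$ on $\{|y|\leq|x|\}$, and (iii) concavity along directions with $|\dot y|\leq|\dot x|$. Notice, however, that these conditions are already met by the trivial candidate $U(x,y,z)=U_B(x,y)/p$ with $U_B$ Burkholder's classical function \eqref{function_U}: Young's inequality $\langle y,z\rangle\leq|y|^p/p+|z|^{p'}/p'$ converts your (i) into Burkholder's majorization $2^\circ$, your (ii) is his $1^\circ$, and your (iii) is the concavity underlying his $3^\circ$. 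So your ``boundary value problem'' is solved by Burkholder's own function, and carrying out your plan would simply reproduce his original argument in dual clothing; nothing in your conditions forces a genuinely new object. Moreover, you do not construct any such $U$---you flag the construction as ``the principal obstacle'' and stop. That obstacle is essentially the entire paper.

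The paper's route is different, and sharper. Its special function $B$ lives on the quadrant $(0,\infty)^2$---it depends only on $|x|$ and $|z|$ and carries \emph{no} argument for the subordinated process $g$. The conditions $1^\circ$--$4^\circ$ concern only $f$ and the dual martingale $h$, and what is actually proved is the stronger joint-variation estimate $\E\sum_k|df_k||dh_k|\leq(p^*-1)\|f\|_{L^p}\|h\|_{L^{p'}}$ (the continuous form is \eqref{stronger_assertion}), in which $g$ does not appear at all. The subordinate $g$ is restored only at the very end, through the two pointwise inequalities $\langle dg_k,dh_k\rangle\leq|dg_k||dh_k|\leq|df_k||dh_k|$ (Kunita--Watanabe plus differential subordination). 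In place of your (ii), the function obeys the \emph{lower} bound $B(x,z)\geq xz$ (the ``initial condition'' $1^\circ$), and the ``lower solution to a novel boundary value problem'' of the abstract refers to the sandwich $xz\leq B(x,z)\leq (p^*-1)^px^p/p+z^{p'}/p'$ coupled with the Monge--Amp\`ere-type inequalities $3^\circ$--$4^\circ$; it is not a lower barrier for Burkholder's $U_B$. Eliminating $y$, rather than merely appending $z$, is what reduces the problem from Nazarov--Treil's four $\mathbb H$-variables (and your three) to the two scalars $(|x|,|z|)$, and that elimination is the step your proposal does not take.
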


This beautiful result has found many applications in various areas of mathematics, including harmonic analysis and geometric function theory. We refer the interested reader to \cite{AIG,BB,BMH,BO,NV} for more information. 

Burkholder's proof of \eqref{moment} rests on the construction of an appropriate special function. Suppose that $U:\mathbb{H}\times \mathbb{H}\to \R$ satisfies the following three conditions:

\smallskip

1$^\circ$ $U(x,y)\leq 0$ if $|y|\leq |x|$,

2$^\circ$ $U(x,y)\geq |y|^p-(p^*-1)^p|x|^p$,

3$^\circ$ $(U(f_n,g_n))_{n\geq 0}$ is a supermartingale for any pair $(f,g)$ of differentially subordinate martingales.

\smallskip

Then \eqref{moment} follows at once by observing that  $\E\big[|g_n|^p-(p^*-1)^p|f_n|^p\big]\leq \E U(f_n,g_n)\leq \E U(f_0,g_0)\leq 0$ and  letting $n\to \infty$. Burkholder \cite{B1} proved that the function
\begin{equation}\label{function_U}
 U(x,y)=p\left(1-\frac{1}{p^*}\right)^{p-1}(|y|-(p^*-1)|x|)(|x|+|y|)^{p-1}
\end{equation}
has all the required properties. See also \cite{B0} for a related approach.

There is a dual method of proving \eqref{moment}, developed by Nazarov, Treil and Volberg \cite{NT,NTV}, which is also based on the construction of a certain special function. Let us start with the case $p=2$, in which the description is particularly easy. Namely, consider the function $\mathbb{B}(x,z)=\frac{1}{2}(|x|^2+|z|^2)$. This function satisfies the following analogues of the above properties 1$^\circ$, 2$^\circ$ and 3$^\circ$:

\smallskip

1$^\circ$' $\mathbb B(x,z)\geq |xz|$,

2$^\circ$' $\mathbb B(x,z)\leq \frac{1}{2}(|x|^2+|z|^2)$ (actually, the equality holds here),

3$^\circ$' For any pair $(f,h)$ of \emph{arbitrary} martingales and any $n\geq 1$ we have 
\begin{equation}\label{dual_conv}
 \E \mathbb B(f_n,h_n)\geq \E \mathbb B(f_{n-1},h_{n-1})+\E |df_n||dh_n|.
\end{equation}

\smallskip

Let us stress here that the martingales $f$ and $h$ appearing in 3$^\circ$' are not related by any domination principle. These three conditions immediately give \eqref{moment} (for $p=2$). Indeed, fix an arbitrary pair $(f,g)$ satisfying the differential subordination and let $h$ be another martingale. By 1$^\circ$', the inductive use of 3$^\circ$' and finally 2$^\circ$', we get that for $n=0,\,1,\,2,\,\ldots,$
$$ \E \sum_{k=0}^n |df_k||dh_k|\leq  \E \mathbb B(f_0,h_0)+\E \sum_{k=1}^n |df_k||dh_k|\leq \E \mathbb B(f_n,h_n)\leq \frac{1}{2}(\E |f_n|^2+\E |h_n^2|).$$
Consequently, by the orthogonality of martingale differences and the differential subordination of $g$ to $f$, we obtain
\begin{align*}
 \E \langle g_n,h_n\rangle&=\E \sum_{k=0}^n \langle dg_k,dh_k\rangle\\
&\leq \E \sum_{k=0}^n |dg_k||dh_k|\leq \E \sum_{k=0}^n |df_k||dh_k|\leq \frac{1}{2}(\|f_n\|_{L^2}^2+\|h_n\|_{L^2}^2).
\end{align*}
By a standard homogenization argument, this gives $\E \langle g_n,h_n\rangle\leq \|f_n\|_{L^2}\|h_n\|_{L^2}$ which implies the desired bound $\|g_n\|_{L^2}\leq \|f_n\|_{L^2}$ by duality. There is a natural question whether this approach can be extended to other values of $p$. This problem was studied by Nazarov and Treil \cite{NT}. For $p>2$, they introduced the following function of \emph{four} variables: for $\zeta,\,\eta\in \mathbb{H}^2$,
$$ \widetilde{\mathbb{B}}(\zeta,\eta)=|\zeta|^p+|\eta|^q+\begin{cases}
|\zeta|^2|\eta|^{2-q} & \mbox{if }|\zeta|^p\geq |\eta|^{p'},\\
\displaystyle \frac{2}{p}|\zeta|^p+\left(\frac{2}{p'}-1\right)|\eta|^{p'} & \mbox{if }|\zeta|^p\leq |\eta|^{p'}.
\end{cases}$$
They showed that the function satisfies appropriate versions of 1$^\circ$, 2$^\circ$ and 3$^\circ$, which yields \eqref{moment} with a suboptimal constant. Despite its non-optimality, this special function has found many applications in harmonic analysis and semigroup theory; see e.g. \cite{DC,DV}. A further improvement is due to Ba\~nuelos and Os\k{e}kowski \cite{BO2}, who identified the appropriate version of $\widetilde{\mathbb{B}}$ leading to the best constant $p^*-1$ in the full range $1<p<\infty$. Still, this special function has four variables and the question remains. Can this approach be simplified to involve a function on $\mathbb{H}^2$, as in the above proof for the case $p=2$? The purpose of this paper is to answer this question in the affirmative and provide the explicit formula for the corresponding special functions.

Actually, we will study the above topic in the more general, continuous-time setting. Suppose $(\Omega,\F,\mathbb{P})$ is a complete probability space, equipped with a filtration $(\F_t)_{t\geq 0}$, such that $\F_0$ contains all the events of probability $0$. Let $X=(X_t)_{t\geq 0}$, $Y=(Y_t)_{t\geq 0}$ be two $\mathbb{H}$-valued local martingales, which have right-continuous paths with limits from the left. That is, a c\`adl\`ag martingales. The continuous-time extension of the differential subordination, which is due to Ba\~nuelos and Wang (see \cite{BW} and \cite{W}), can be formulated as follows. The process $Y$ is differentially subordinate to $X$, if the difference $([X,X]_t-[Y,Y]_t)$ is nonnegative and nondecreasing. Here $([X,X]_t)$ denotes the quadratic variance process of $X$: see Dellacherie and Meyer \cite{DM} for the definition in the real-valued case, and extend it to the vector context by $[X,X]_t=\sum_{j=1}^\infty [X^j,X^j]$, where $X^j$ is the $j$-th coordinate of $X$ (recall that we have assumed $\mathbb{H}=\ell^2$). This notion is a generalization of \eqref{differ}. To see this, note that if one treats discrete-time sequences $f,\, g$ as continuous time processes (via $X_t=f_{\lfloor t\rfloor}$ and $Y_t=g_{\lfloor t\rfloor}$), then the difference
$$ [X,X]_t-[Y,Y]_t=\sum_{k=0}^{\lfloor t \rfloor} \big( |df_k|^2-|dg_k|^2 \big)$$
is nonnegative and nondecreasing if and only if \eqref{differ} is valid.

It can be shown that essentially, all the results for discrete-time differentially subordinate (local) martingales carry over, with unchanged constants, to the continuous context: see the paper \cite{W} and the monograph \cite{Os2}. Roughly speaking, the transference can be described as follows.  As we have seen above, Burkholder's method of proving martingale inequalities involves the construction of a special function, satisfying appropriate size and concavity-type requirements. Once such a function is found, the continuous-time version follows from It\^o's formula and smoothing or stopping time arguments. In particular, this allowed Wang \cite{W} to obtain the following extension of \eqref{moment}. Here and below, we use the notation $\|X\|_{L^p}=\sup\|X_\tau\|_{L^p}$, where the supremum is taken over all finite stopping times.

\begin{theorem}
Suppose that $X$, $Y$ are $\mathbb{H}$-valued local martingales such that $Y$ is differentially subordinate to $X$. Then for any $1<p<\infty$ we have the sharp inequality
\begin{equation}\label{mainin0}
 \|Y\|_{L^p} \leq (p^*-1)\|X\|_{L^p}.
\end{equation}
\end{theorem}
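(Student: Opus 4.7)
The plan is to extend the dual Bellman function argument sketched in the introduction for $p=2$ to every $1 < p < \infty$, and then transfer the resulting estimate to continuous-time local martingales via It\^o's formula.

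The central task is to produce a function $\mathbb{B}_p : \mathbb{H} \times \mathbb{H} \to \mathbb{R}$, depending only on the pair of norms $(|x|,|z|)$, satisfying three conditions that generalize the properties 1$^\circ$', 2$^\circ$', 3$^\circ$' from the introduction: (i) a size lower bound $\mathbb{B}_p(x,z) \geq |x|\,|z|$; (ii) a size upper bound $\mathbb{B}_p(x,z) \leq \frac{A}{p}|x|^p + \frac{B}{p'}|z|^{p'}$ for explicit constants $A,B > 0$ with $A^{1/p}B^{1/p'} = p^* - 1$; and (iii) a Bellman inequality: for every pair $(M,N)$ of $\mathbb{H}$-valued c\`adl\`ag local martingales (with no domination relation assumed), the process $\mathbb{B}_p(M_t,N_t) - \int_0^t \sqrt{d[M,M]_s}\sqrt{d[N,N]_s}$ is a submartingale. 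The existence of $\mathbb{B}_p$ amounts, after passing to the radial coordinates $(|x|,|z|) \in (0,\infty)^2$, to the existence of a lower solution of the novel boundary value problem alluded to in the abstract. Verifying (iii) in continuous time reduces, via It\^o's formula applied to a mollification of $\mathbb{B}_p$, to a pointwise concavity-type inequality on $\mathbb{H}\times\mathbb{H}$, with c\`adl\`ag jumps controlled by the same pointwise bound and integrability handled by standard truncation/stopping arguments.

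Once $\mathbb{B}_p$ is in hand, the duality chain from the introduction extends verbatim. Fix a finite stopping time $\tau$, martingales $X$, $Y$ with $Y$ differentially subordinate to $X$, and an arbitrary $\mathbb{H}$-valued local martingale $N$. The orthogonality of martingale differences and the Kunita--Watanabe inequality give
$$ \mathbb{E}\langle Y_\tau, N_\tau\rangle = \mathbb{E}[Y,N]_\tau \leq \mathbb{E}\int_0^\tau \sqrt{d[Y,Y]_s}\sqrt{d[N,N]_s} \leq \mathbb{E}\int_0^\tau \sqrt{d[X,X]_s}\sqrt{d[N,N]_s}, $$
where the last step uses differential subordination ($d[Y,Y]\leq d[X,X]$ as measures). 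Applying (iii) with $(M,N) = (X,N)$ and then (ii) yields
$$ \mathbb{E}\langle Y_\tau, N_\tau\rangle \leq \mathbb{E}\mathbb{B}_p(X_\tau, N_\tau) \leq \frac{A}{p}\|X_\tau\|_{L^p}^p + \frac{B}{p'}\|N_\tau\|_{L^{p'}}^{p'}. $$
Rescaling $X \mapsto \lambda X$, $Y \mapsto \lambda Y$ (which preserves differential subordination), minimizing over $\lambda > 0$, and using the normalization $A^{1/p}B^{1/p'} = p^*-1$ converts this sum bound into the bilinear one $\mathbb{E}\langle Y_\tau, N_\tau\rangle \leq (p^*-1)\|X_\tau\|_{L^p}\|N_\tau\|_{L^{p'}}$. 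Taking the supremum over $N$ with $\|N_\tau\|_{L^{p'}} \leq 1$, and then over $\tau$, produces \eqref{mainin0}.

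The main obstacle is clearly the construction of $\mathbb{B}_p$ together with the verification of the concavity property (iii). The candidate will likely be piecewise defined on $\mathbb{H}\times\mathbb{H}$, with an interface of the form $|z|^{p'} \sim c|x|^p$ separating two regimes (as in the four-variable Nazarov--Treil function $\widetilde{\mathbb{B}}$), but now carrying genuinely two-dimensional radial data; the verification of (iii) will require a delicate analysis of the associated radial differential inequality together with a careful matching across the interface. The sharpness of $p^*-1$ is inherited from Burkholder's original result and requires no further work here.
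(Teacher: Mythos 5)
Your proof plan correctly reproduces the architecture of the paper's argument: a two--variable radial Bellman function $\mathbb{B}_p$ depending only on $(|x|,|z|)$, satisfying the three dual conditions (lower bound by $|x||z|$, upper bound by a Young-type expression, and the submartingale/Kunita--Watanabe compatibility), followed by the chain $\E\langle Y_\tau,N_\tau\rangle=\E[Y,N]_\tau\leq\E[X,N]$-type bound $\leq \E\,\mathbb{B}_p(X_\tau,N_\tau)$, majorization, homogenization, and duality. This is precisely what the paper does: Theorem~\ref{mainthm} (the bilinear estimate \eqref{mainin}) is established via the function $B$ on $(0,\infty)^2$, and \eqref{mainin0} is deduced from it by the duality and scaling argument you describe. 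So the high-level strategy is a match.

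The genuine gap is that the entire substance of the proof---the construction of $\mathbb{B}_p$ and the verification of the submartingale property (iii)---is left undone. You correctly flag this as ``the main obstacle,'' but you do not resolve it, and your guess about what the answer looks like is also off target: you predict a piecewise formula with a seam along $|z|^{p'}\sim c|x|^p$, in the spirit of the Nazarov--Treil four-variable function $\widetilde{\mathbb{B}}$, whereas the paper's $B$ is given by a \emph{single} smooth formula on $(0,\infty)^2$, built from an auxiliary function $\varphi$ defined implicitly by $\varphi(1+\varphi)^{p-2}=p^{p-2}s$ (for $1<p<2$, Lemma~\ref{phi<2}) or by $p(1-1/p)^{p-1}(1+\varphi)^{p-2}(\varphi-p+2)=s$ (for $p>2$, Lemma~\ref{phi>2}), with $s=x^{1-p}z$. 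The curve $z=s_0x^{p-1}$ (equivalently $z^{p'}=c\,x^p$) appears only as the tangency locus with the majorant in the Monge--Amp\`ere derivation of Section~4, not as a piecewise interface. Moreover, the verification of (iii) is not a routine mollification exercise: it rests on two carefully formulated pointwise conditions, the monotonicity inequalities \eqref{monotonicity} and the concavity/discriminant inequality \eqref{discriminant}, whose interplay (via \eqref{bound_1} and \eqref{jump_control}) is exactly what makes the continuous part, the jumps, and the Kunita--Watanabe step fit together. Without these the proposal is a blueprint, not a proof.
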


Our contribution is to present a dual approach to this $L^p$ bound, in the spirit described above in the discrete-time setup. Namely, using a certain special function of two variables, we will establish the following estimate, which is easily seen to imply \eqref{mainin0} by duality arguments.

\begin{theorem}\label{mainthm}
Suppose that $X$, $Y$, $Z$ are $\mathbb{H}$-valued local martingales such that $Y$ is differentially subordinate to $X$. Then for any $1<p<\infty$ we have the sharp inequality
\begin{equation}\label{mainin}
 \|[Y,Z]_\infty\|_{L^1}\leq (p^*-1)\|X\|_{L^p}\|Z\|_{L^{p'}}.
\end{equation}
\end{theorem}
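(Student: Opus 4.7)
The plan is to extend the discrete-time dual Bellman approach outlined above for $p=2$ to every $1<p<\infty$, working directly in the continuous-time setting. The crux is to construct a function $\mathbb{B}\colon\mathbb{H}\times\mathbb{H}\to\mathbb{R}$ of two variables satisfying the analogues of conditions 1$^\circ$', 2$^\circ$' and 3$^\circ$'. Concretely, I would look for $\mathbb{B}$ with the following three properties (after appropriate normalization):
\begin{enumerate}
\item[(a)] (pointwise lower bound) $\mathbb{B}(x,z)\ge |x|\,|z|$;
\item[(b)] (pointwise upper bound) $\mathbb{B}(x,z)\le A_p|x|^p+B_p|z|^{p'}$ with constants $A_p,B_p$ chosen so that $\inf_{\lambda>0}\bigl(A_p\lambda^p+B_p\lambda^{-p'}\bigr)=p^*-1$;
\item[(c)] (sharp convexity) a pointwise differential inequality on $\mathbb{B}$ which forces, for \emph{any} pair of local martingales $X,Z$, the process $\mathbb{B}(X_t,Z_t)-\int_0^t\sqrt{d[X,X]_s\,d[Z,Z]_s}$ to be a local submartingale.
\end{enumerate}

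Granted such a function, the estimate \eqref{mainin} drops out with little extra work. Indeed, by the Kunita--Watanabe inequality $|d[Y,Z]_s|\le\sqrt{d[Y,Y]_s\,d[Z,Z]_s}$, and by the differential subordination of $Y$ to $X$, $d[Y,Y]_s\le d[X,X]_s$; combining these with (a) and (c) (after a standard localization argument that makes $\mathbb{E}\mathbb{B}(X_0,Z_0)\ge 0$ and lets us take expectations) yields
\[
\mathbb{E}\,\bigl|[Y,Z]_\infty\bigr|\;\le\;\mathbb{E}\int_0^\infty\bigl|d[Y,Z]_s\bigr|\;\le\;\mathbb{E}\int_0^\infty\!\sqrt{d[X,X]_s\,d[Z,Z]_s}\;\le\;\mathbb{E}\,\mathbb{B}(X_\infty,Z_\infty).
\]
Inserting (b), the right-hand side is $\le A_p\mathbb{E}|X_\infty|^p+B_p\mathbb{E}|Z_\infty|^{p'}$. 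Since the left-hand side is invariant under the homogenization $(X,Y,Z)\mapsto(\lambda X,\lambda Y,\lambda^{-1}Z)$ (which preserves the differential subordination), optimizing in $\lambda>0$ produces precisely the bound $(p^*-1)\|X\|_{L^p}\|Z\|_{L^{p'}}$.

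The principal obstacle, and the creative contribution of the paper, is the explicit construction of $\mathbb{B}$ and the verification of property (c). Guided by Burkholder's $U$-function \eqref{function_U} and the four-variable Nazarov--Treil function $\widetilde{\mathbb{B}}$, I would expect $\mathbb{B}$ to be a radial expression in $(|x|,|z|)$, given piecewise with a split along a critical curve of the form $|x|^p=c\,|z|^{p'}$. Property (c), the most delicate requirement, translates via It\^o's formula into two pointwise inequalities: a Hessian condition governing the continuous part of the semimartingale, together with a Jensen-type jump inequality
\[
\mathbb{B}(x+h,z+k)\ge \mathbb{B}(x,z)+\bigl\langle \nabla_x\mathbb{B}(x,z),h\bigr\rangle+\bigl\langle \nabla_z\mathbb{B}(x,z),k\bigr\rangle+|h|\,|k|
\]
governing the jump part. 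Verifying these will require a careful case analysis in each region and a matching argument along the interface. The remaining technicalities -- mollifying $\mathbb{B}$ to legitimately apply It\^o, localizing for general local martingales, and handling the jumps -- can be dispatched by the now-standard techniques recorded in \cite{Os2,W}.
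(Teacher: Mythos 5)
Your proposal outlines essentially the same dual Bellman-function approach that the paper follows: conditions (a), (b), (c) correspond to the paper's 1$^\circ$, 2$^\circ$, and the pair 3$^\circ$--4$^\circ$ (which together imply both the Hessian bound \eqref{bound_1} and the jump inequality \eqref{jump_control}), the homogenization step matches, and you correctly identify that the real content of the paper is the explicit construction and verification of the two-variable function $B$, which your sketch leaves open.
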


The paper is organized as follows. Our approach, reducing \eqref{mainin} to the search for an appropriate function, is described in the next section. In Section 3 we provide the explicit formulas for the special functions and prove that they enjoy all the required properties. In the last part we present some informal steps which led us to the discovery of the special function.

\section{On the approach}

The purpose of this section is to show that the existence of a certain special function on $\mathbb{H}^2$, or rather $\R^d\times \R^d$, implies the validity of \eqref{mainin}. Actually, all we need is an appropriate function defined on the first quadrant $(0,\infty)^2$. Namely, let $1<p<\infty$ and $K>0$ be fixed parameters. Suppose that $B:(0,\infty)^2\to \R$ is a function of class $C^2$, which enjoys the following properties:

\begin{itemize}
\item[1$^\circ$] (Initial condition) We have $B(x,z)\geq xz$.

\item[2$^\circ$] (Majorization) For any $x,\,z>0$ we have
\begin{equation}\label{maj}
B(x,z)\leq \frac{Kx^p}{p}+\frac{z^{p'}}{p'}.
\end{equation}

\item[3$^\circ$] (Monotonicity) For any $x,\,z>0$ we have 
\begin{equation}\label{monotonicity}
\frac{B_{xx}(x,z)}{|B_{xz}(x,z)|+1}\leq \frac{B_x(x,z)}{x}\quad \mbox{and}\quad \frac{B_{zz}(x,z)}{|B_{xz}(x,z)|+1}\leq \frac{B_z(x,z)}{z}.
\end{equation}

\item[4$^\circ$] (Concavity) For any $x,\,z>0$ and any $h,\,k\in \R$,
\begin{equation}\label{convex}
B_{xx}(x,z)h^2+2B_{xz}(x,z)hk+B_{zz}(x,z)k^2\geq 2|h||k|.
\end{equation}
\end{itemize}

Several observations are in order. First, we see that 1$^\circ$ and 2$^\circ$ are perfect analogues of the conditions 1$^\circ$', 2$^\circ$' appearing in the introduction. Furthermore, the requirements 3$^\circ$ and 4$^\circ$ should be treated as \emph{pointwise} conditions related to 3$^\circ$'. Next,  note that if $B$ satisfies 4$^\circ$, then we also have
\begin{equation}\label{convex2}
B_{xx}(x,z)h^2-2|B_{xz}(x,z)|\,|h||k|+B_{zz}(x,z)k^2\geq 2|h||k|
\end{equation}
(simply plug $h:=-\operatorname*{sgn}(B_{xz}(x,z)hk)h$ into \eqref{convex}). Consequently, we get
$$ B_{xx}(x,z)h^2-2(|B_{xz}(x,z)|+1)hk+B_{zz}(x,z)k^2\geq 0$$
for all $h,\,k\in \R$, which forces the corresponding discriminant to be nonpositive:
\begin{equation}\label{discriminant}
B_{xx}(x,z)B_{zz}(x,z)\geq (|B_{xz}(x,z)|+1)^2.
\end{equation}
Actually, it is easy to see that the implications can be reversed: the inequality \eqref{discriminant}, together with the inequality $B_{xx}(x,z)\geq 0$, implies the validity of \eqref{convex}. 

We are ready to introduce the special function $\mathbb{B}$, the extension of $B$ to higher dimensions. Namely, given $d\geq 2$, define $\mathbb{B}:(\R^d\times \R^d)\setminus \{(x,z):|x||z|=0\}\to \R$ by $ \mathbb{B}(x,y)=B(|x|,|y|).$ As we shall see, this object will lead us to the proof of \eqref{mainin}. In what follows, $\mathbb{B}_x$ and $\mathbb{B}_z$ will denote the vectors of partial derivatives of $\mathbb{B}$ with respect to the variables $x_1$, $x_2$, $\ldots$, $x_d$ and $z_1$, $z_2$, $\ldots$, $z_d$, respectively. Furthermore, the symbol $D^2\mathbb{B}$ will stand for the Hessian matrix of $\mathbb{B}$ and $\langle\cdot,\cdot\rangle$ will be the scalar product in $\R^d$.

\begin{lemma}
If $B$ satisfies 3$^\circ$ and 4$^\circ$, then for any $x,\,z\in \R^d\setminus \{0\}$ and any $h,\,k\in \R^d$ satisfying $x+h\neq 0$ and $z+k\neq 0$, we have
\begin{equation}\label{jump_control}
 \mathbb{B}(x+h,z+k)\geq \mathbb{B}(x,z)+\langle \mathbb{B}_x(x,z),h\rangle+\langle \mathbb{B}_z(x,z),k\rangle+ |h||k|.
\end{equation}
\end{lemma}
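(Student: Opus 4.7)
The plan is to parametrize the segment by $\varphi(s) := \mathbb{B}(x+sh, z+sk)$, $s \in [0,1]$, and establish the pointwise bound $\varphi''(s) \geq 2|h||k|$; Taylor's formula with integral remainder then yields
$$\mathbb{B}(x+h, z+k) - \mathbb{B}(x, z) - \langle \mathbb{B}_x(x,z), h\rangle - \langle \mathbb{B}_z(x,z), k\rangle \;=\; \int_0^1 (1-s)\varphi''(s)\,ds \;\geq\; |h||k|.$$
I would first carry this out under the assumption that the whole segment stays in the smooth regime $\{x+sh\neq 0\}\cap\{z+sk\neq 0\}$. The general case follows by perturbing $(h,k)\mapsto(h+\varepsilon e_1, k+\varepsilon e_2)$ with $e_1,e_2$ chosen so that the perturbed segment avoids the singular set---a generic choice in $\R^d$ works, since antiparallelity is a codimension-one condition---and then passing to the limit using continuity of $\mathbb{B}$, $\mathbb{B}_x$, $\mathbb{B}_z$ at the two fixed nonzero endpoints.

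Write $\tilde x=x+sh$, $\tilde z=z+sk$, with moduli $a=|\tilde x|$, $c=|\tilde z|$ and unit radial vectors $u=\tilde x/a$, $w=\tilde z/c$, and decompose $h=h_\parallel u + h^\perp$, $k=k_\parallel w + k^\perp$, where $h_\parallel=\langle h,u\rangle$ and $\langle u,h^\perp\rangle=0$. A direct chain-rule computation using $\partial_{x_i}|x|=x_i/|x|$ and $\partial_{x_i}\partial_{x_j}|x|=\delta_{ij}/|x|-x_ix_j/|x|^3$ yields
$$\varphi''(s) \;=\; B_{xx}\,h_\parallel^2 + 2 B_{xz}\, h_\parallel k_\parallel + B_{zz}\, k_\parallel^2 + \tfrac{B_x}{a}\,|h^\perp|^2 + \tfrac{B_z}{c}\,|k^\perp|^2,$$
with all derivatives of $B$ evaluated at $(a,c)$. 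Abbreviating $A=B_x/a$, $C=B_z/c$, $m=|B_{xz}|+1$, monotonicity 3$^\circ$ gives $B_{xx}\le Am$ and $B_{zz}\le Cm$, while \eqref{discriminant} (a consequence of 4$^\circ$) gives $B_{xx}B_{zz}\ge m^2$; combining these yields the auxiliary bounds
$$AC \;\ge\; 1, \qquad AB_{zz}\;\ge\; m, \qquad CB_{xx}\;\ge\; m.$$

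Now parametrize $h_\parallel^2=\xi|h|^2$ and $k_\parallel^2=\eta|k|^2$ (so $|h^\perp|^2=(1-\xi)|h|^2$ and $|k^\perp|^2=(1-\eta)|k|^2$), and replace the cross term by its worst case $2B_{xz}h_\parallel k_\parallel\ge -2(m-1)|h||k|\sqrt{\xi\eta}$. Setting $P=B_{xx}\xi + A(1-\xi)$ and $Q=B_{zz}\eta + C(1-\eta)$, this rearranges as
$$\varphi''(s) \;\ge\; |h|^2P + |k|^2Q - 2(m-1)|h||k|\sqrt{\xi\eta} \;\ge\; 2|h||k|\sqrt{PQ} - 2(m-1)|h||k|\sqrt{\xi\eta},$$
the last step being AM-GM. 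Expanding $PQ$ and applying the three auxiliary bounds gives
$$PQ \;\ge\; m^2\xi\eta + m\xi(1-\eta) + m(1-\xi)\eta + (1-\xi)(1-\eta) \;=\; (m-1)^2\xi\eta + (m-1)(\xi+\eta) + 1,$$
and one further AM-GM, $\xi+\eta\ge 2\sqrt{\xi\eta}$, produces $PQ\ge \bigl(1+(m-1)\sqrt{\xi\eta}\bigr)^2$, i.e.\ $\sqrt{PQ}\ge 1+(m-1)\sqrt{\xi\eta}$. Plugging this in, the negative correction is cancelled exactly and $\varphi''(s)\ge 2|h||k|$. The main obstacle I anticipate is the algebraic inequality $\sqrt{PQ}\ge 1+(m-1)\sqrt{\xi\eta}$: it requires simultaneous use of 3$^\circ$ and 4$^\circ$, because the discriminant inequality alone controls only $B_{xx}B_{zz}$, while the monotonicity assumption is precisely what is needed to lower-bound the mixed terms $B_{xx}C$ and $AB_{zz}$ in the expansion of $PQ$ by $m$.
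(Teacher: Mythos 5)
Your proof is correct and follows the same overall strategy as the paper: reduce the claim to the pointwise second-derivative bound $\varphi''(s)\ge 2|h||k|$ along the segment, compute $\varphi''$ via the chain rule using the radial/tangential decomposition, and handle the degenerate (collinear) configurations by a small perturbation and continuity at the two fixed endpoints. (The paper integrates via the mean value theorem, $G(1)-G(0)-G'(0)=\tfrac12 G''(t_0)$, rather than Taylor with integral remainder, but that is cosmetic.)

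Where you genuinely diverge is in the algebra used to establish $\varphi''\ge 2|h||k|$. The paper splits $\varphi''$ into four terms $J_1,\dots,J_4$, reweighting the tangential second-order part by $|B_{xz}|/m$ and $1/m$ with $m=|B_{xz}|+1$: $J_1$ carries the entire cross term and is $\ge 0$ by the discriminant inequality \eqref{discriminant}, $J_2$ and $J_3$ are $\ge 0$ by the monotonicity condition 3$^\circ$, and the residual $J_4=\big(B_{xx}|h|^2+B_{zz}|k|^2\big)/m$ is $\ge 2|h||k|$ again by \eqref{discriminant}. You instead group the radial and tangential contributions into the interpolants $P=B_{xx}\xi+A(1-\xi)$ and $Q=B_{zz}\eta+C(1-\eta)$, derive the auxiliary products $AC\ge 1$, $AB_{zz}\ge m$, $CB_{xx}\ge m$ by combining 3$^\circ$ with \eqref{discriminant}, expand $PQ$ term by term, and finish with two applications of AM--GM to get $\sqrt{PQ}\ge 1+(m-1)\sqrt{\xi\eta}$, which exactly cancels the worst-case cross term. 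Both routes are short and correct. The paper's split makes transparent which hypothesis kills which piece; your version exposes the multiplicative structure and shows that the estimate becomes an equality precisely when all of $B_{xx}=Am$, $B_{zz}=Cm$, $B_{xx}B_{zz}=m^2$, and $\xi=\eta$ hold simultaneously, which is a useful sanity check that the constant $2$ cannot be improved.
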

\begin{proof}
By continuity, we may assume assume that $x$ is not a multiple of $h$ and similarly that $z$ is not a multiple of $k$. That is, $x+th\neq 0$ and $z+tk\neq 0$ for all $t$. Consider the function $G(t)=\mathbb{B}(x+th,z+tk)$, given for $t\in \R$; then the assertion is equivalent to $G(1)\geq G(0)+G'(0)+|h||k|$. Observe that it is enough to show that $G''(t)\geq 2|h||k|$ for all $t$. Indeed, having proved this, we apply the mean value theorem to obtain $ G(1)-G(0)-G'(0)=\frac{1}{2}G''(t_0)$ for some intermediate number $t_0\in (0,1)$, and the claim follows.

So, fix $t\in \R$. Setting $x'=(x+th)/|x+th|$, $z'=(z+tk)/|z+tk|$ and $w=(|x+th|,|z+tk|)$, we compute that
\begin{equation}\label{bound_2}
\begin{split}
 G''(t)&=\langle D^2\mathbb B(x+th,z+tk)(h,k),(h,k)\rangle\\
&=\bigg\langle D^2B(w)\Big(\langle x',h\rangle,\langle z',k\rangle\Big),\Big(\langle x',h\rangle,\langle z',k\rangle\Big)\bigg\rangle\\
&\quad +B_x(w)\cdot \frac{|h|^2-\langle x',h\rangle^2}{|x+th|}+B_z(w)\cdot \frac{|k|^2-\langle z',k\rangle^2}{|z+tk|}\\
&=J_1+J_2+J_3+J_4,
\end{split}
\end{equation}
where
\begin{align*}
 J_1&= \frac{|B_{xz}(w)|}{|B_{xz}(w)|+1}B_{xx}(w)\langle x',h\rangle^2+2B_{xz}(w)\langle x',h\rangle\langle z',k\rangle+
\frac{|B_{xz}(w)|}{|B_{xz}(w)|+1}B_{zz}(w)\langle z',k\rangle^2,\\
J_2&=\left[\frac{B_x(w)}{|x+th|}-\frac{B_{xx}(w)}{|B_{xz}(w)|+1}\right](|h|^2-\langle x',h\rangle^2),\\
J_3&=\left[\frac{B_z(w)}{|z+tk|}-\frac{B_{zz}(w)}{|B_{xz}(w)|+1}\right]\!(|k|^2-\langle z',k\rangle^2),\\
J_4&=\frac{B_{xx}(w)}{|B_{xz}(w)|+1}|h|^2+\frac{B_{zz}(w)}{|B_{xz}(w)|+1}|k|^2.
\end{align*}
Let us analyze the terms $J_1$, $J_2$, $J_3$ and $J_4$. The first term is nonnegative.  To see this, note that $2B_{xz}(w)\langle x',h\rangle\langle z',k\rangle\geq -2|B_{xz}(w)\langle x',h\rangle\langle z',k\rangle|$ and hence it is enough to show that 
$$ B_{xx}(w)\langle x',h\rangle^2-2(|B_{xz}(w)|+1)|\langle x',h\rangle|\,|\langle z',k\rangle|+B_{zz}(w)\langle z',k\rangle^2\geq 0.$$
But this estimate follows directly from \eqref{convex2}. The terms $J_2,\,J_3$ are also nonnegative, which is an immediate consequence of 3$^\circ$. Finally, observe that  $J_4\geq 2|h||k|$: if we rewrite this in the equivalent form
$$ B_{xx}(w)|h|^2-2B_{xz}(w)|h||k|+B_{zz}(w)|k|^2\geq 2|h||k|,$$
we recognize \eqref{convex2} again. So, we have established the bound $G''(t)\geq 2|h||k|$ for all $t$.
\end{proof}

\begin{remark}
In particular, setting $t=0$ in \eqref{bound_2}, we obtain the estimate
\begin{equation}\label{bound_1}
\langle D^2\mathbb B(x,z)(h,k),(h,k)\rangle\geq \frac{B_{xx}(x,z)}{|B_{xz}(x,z)|+1}|h|^2+\frac{B_{zz}(x,z)}{|B_{xz}(x,z)|+1}|k|^2,
\end{equation}
which will be useful later.
\end{remark}

Here is the main result of this section, which links the special functions to the $L^p$ estimates for differentially subordinate (local) martingales.
\begin{theorem}
If there is a function $B:(0,\infty)^2\to \R$ satisfying 1$^\circ$-4$^\circ$, then \eqref{mainin} holds, with $p^*-1$ replaced by $K^{1/p}$.
\end{theorem}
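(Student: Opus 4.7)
The plan is to apply It\^o's formula to $\mathbb{B}(X_t,Z_t):=B(|X_t|,|Z_t|)$ and use the lemma to show that the resulting It\^o correction dominates the total variation of $[Y,Z]$. Following standard localization and mollification (stopping $X$, $Z$ to remain in a compact subset of $\{(x,z):|x|\wedge|z|>\delta\}$ where $\mathbb{B}$ is $C^2$, then sending $\delta\downarrow 0$), It\^o's formula gives
$$\mathbb{B}(X_\tau,Z_\tau)=\mathbb{B}(X_0,Z_0)+M_\tau+A_\tau^c+A_\tau^d,$$
where $M$ is a local martingale coming from the gradient terms, and $A^c$, $A^d$ are the continuous and pure-jump components of the correction.

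For the jump part, the lemma gives $A_\tau^d\geq\sum_{s\leq\tau}|\Delta X_s||\Delta Z_s|$ directly. For $A^c$, I would use the fact that $\langle D^2\mathbb{B}(x,z)(h,k),(h,k)\rangle\geq 2|h||k|$ for arbitrary vectors $h,k$, which is contained in the lemma's proof (since $G''(t)\geq 2|h||k|$ for all $t$). Letting $\Sigma(s)$ denote the continuous part of the $2d\times 2d$ covariation matrix of $(X,Z)$ and writing $\Sigma=UU^{\top}$ with $U=\binom{A}{C}$, the bound yields $\tfrac{1}{2}\operatorname{Tr}(D^2\mathbb{B}\cdot\Sigma)\geq\sum_k|A_{\cdot k}||C_{\cdot k}|$; the left side depends only on $\Sigma$, so we may choose the factorization $U$ to maximize the right side. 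The maximum is $\|A\|_F\|C\|_F=\sqrt{(d[X,X]^c/ds)(d[Z,Z]^c/ds)}$, attained when the columns of $A$ and $C$ have proportional Euclidean norms (making Cauchy--Schwarz an equality). Integrating, $A_\tau^c\geq\int_0^\tau\sqrt{d[X,X]^c_s\,d[Z,Z]^c_s}$. Combining Kunita--Watanabe with differential subordination gives $|d[Y,Z]^c|\leq\sqrt{d[Y,Y]^c\,d[Z,Z]^c}\leq\sqrt{d[X,X]^c\,d[Z,Z]^c}$ and, for jumps, $|\Delta[Y,Z]_s|\leq|\Delta Y_s||\Delta Z_s|\leq|\Delta X_s||\Delta Z_s|$ (using $|\Delta Y_s|\leq|\Delta X_s|$ from differential subordination). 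Hence the total variation of $[Y,Z]$ on $[0,\tau]$---and in particular $|[Y,Z]_\tau|$---is bounded by $A_\tau^c+A_\tau^d$.

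Taking expectations and using $\mathbb{B}(X_0,Z_0)\geq|X_0||Z_0|\geq 0$ from $1^\circ$ together with majorization $2^\circ$,
$$\E|[Y,Z]_\tau|\leq\E\mathbb{B}(X_\tau,Z_\tau)\leq\frac{K}{p}\E|X_\tau|^p+\frac{1}{p'}\E|Z_\tau|^{p'}.$$
Homogenization via $(X,Y,Z)\mapsto(cX,cY,c^{-1}Z)$---which preserves both differential subordination and $[Y,Z]$---optimized in $c>0$ converts this into $\E|[Y,Z]_\tau|\leq K^{1/p}\|X_\tau\|_p\|Z_\tau\|_{p'}\leq K^{1/p}\|X\|_p\|Z\|_{p'}$. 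Since the same argument actually bounds the (nondecreasing) expected total variation of $[Y,Z]$ on $[0,\tau]$, monotone convergence lets $\tau\uparrow\infty$ and yields \eqref{mainin} with $p^*-1$ replaced by $K^{1/p}$. The main obstacle is the factorization step for $A^c$: the pointwise concavity bound $\langle D^2\mathbb{B}(h,k),(h,k)\rangle\geq 2|h||k|$ yields only a column-wise sum, which by na\"ive Cauchy--Schwarz is too small; exploiting the non-uniqueness of $U$ (equivalently, a predictable orthogonal rotation of the Brownian motion driving $(X,Z)$) is exactly what recovers the Frobenius product $\sqrt{d[X,X]^c\,d[Z,Z]^c}$ needed to dominate $|d[Y,Z]^c|$. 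The remaining technicalities (regularization near $\{|x|\wedge|z|=0\}$, stopping arguments, and the exchange of limits) are routine.
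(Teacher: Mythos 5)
Your overall strategy---applying It\^o's formula to $\mathbb{B}(X,Z)$, handling the jumps with \eqref{jump_control}, and extracting the bilinear term from the second-order correction before taking expectations, majorizing, and homogenizing---matches the paper's. The jump part and the concluding steps (including the observation that the argument actually controls the total variation of $[Y,Z]$, which is also recorded in the paper) are essentially identical.

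The genuine divergence is in your treatment of the continuous correction $A^c$, and there you have an unfilled step. You work only with the scalar pointwise bound $\langle D^2\mathbb{B}(h,k),(h,k)\rangle\geq 2|h||k|$, observe correctly that summing it column by column over a factorization $\Sigma=UU^\top$ gives just $\sum_k|A_{\cdot k}||C_{\cdot k}|$, which can be far below the Frobenius product, and then propose to optimize over the factorization (a predictable orthogonal rotation) so that the column norms of $A$ and $C$ become proportional. That such a rotation always exists is a real theorem: it is the assertion that the traceless symmetric matrix $A^\top A-(\|A\|_F^2/\|C\|_F^2)\,C^\top C$ admits an orthonormal basis in which its diagonal vanishes, a consequence of the Schur--Horn theorem. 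To use it pathwise you additionally need a \emph{predictable} measurable selection of the rotation. Neither of these is proved or even named in your proposal, so as written this step is a gap, although it can in principle be filled.

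The paper sidesteps all of this by not discarding the structure of the Hessian. From $3^\circ$ and $4^\circ$ it derives the \emph{diagonal} lower bound \eqref{bound_1},
\[
\langle D^2\mathbb{B}(x,z)(h,k),(h,k)\rangle\ \geq\ \frac{B_{xx}}{|B_{xz}|+1}\,|h|^2+\frac{B_{zz}}{|B_{xz}|+1}\,|k|^2,
\]
whose coefficients $a,b$ satisfy $ab\geq1$ by \eqref{discriminant}. Because this bound is a pure sum of squares in $|h|$ and $|k|$, summing over the columns of \emph{any} factorization of $\Sigma$ already yields $a\,d[X,X]^c+b\,d[Z,Z]^c$; differential subordination replaces $d[X,X]^c$ by $d[Y,Y]^c$, and then AM--GM together with Kunita--Watanabe gives $\geq 2|d[Y,Z]^c|$ directly. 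No rotation, no Schur--Horn, no selection argument. In short: you identified the correct obstacle, but by quoting only the weaker inequality $\geq 2|h||k|$ rather than the anisotropic bound \eqref{bound_1} that $B$ actually provides, you forced yourself into a harder auxiliary argument that your write-up does not complete.
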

\begin{proof}
By standard limiting arguments, it is enough to establish the desired estimate in the case when the processes take values in a finite- and at least two-dimensional subspace of $\mathbb{H}$.  That is, we may set $\mathbb{H}=\R^d$ for some $d\geq 2$. Let $X=(X_t)_{t\geq 0}$, $Z=(Z_t)_{t\geq 0}$ be arbitrary local martingales with values in $\R^d$. We may restrict ourselves to the case $\|X\|_{L^p}<\infty$ and $\|Z\|_{L^{p'}}<\infty$, since otherwise there is nothing to prove. Furthermore,  we may and do assume that $X$ and $Z$ are bounded away from zero, adding one dimension to $\mathbb{H}$ and modifying the processes slightly (if necessary). 

For any integer $N$, consider the stopping time $T_N=\inf\{t\geq 0:|[Y,Z]_t|\geq N\}$. By properties of stochastic integrals, the processes $\left(\int_{0+}^{t} \mathbb{B}_x(X_{s-},Z_{s-})\cdot\mbox{d}X_s\right)_{t\geq 0}$ and $\left(\int_{0+}^{ t} \mathbb{B}_z(X_{s-},Z_{s-})\cdot\mbox{d}Z_s\right)_{t\geq 0}$ are local martingales. Fix an arbitrary sequence $(\eta_n)_{n\geq 0}$ of stopping times which localizes these integrals and the processes $X$, $Z$. Fix $n$ and set $\tau_n=\eta_n\wedge T_N$. The application of It\^o's formula to the process $(\mathbb{B}(X_{\tau_n\wedge t},Z_{\tau_n\wedge t}))_{t\geq 0}$ yields
\begin{equation}\label{ito}
 \mathbb{B}(X_{\tau_n\wedge t},Z_{\tau_n\wedge t})=I_0+I_1+I_2/2+I_3,
\end{equation}
where
\begin{align*}
I_0&=\mathbb{B}(X_0,Z_0),\\
I_1&=\int_{0+}^{\tau_n\wedge t} \mathbb{B}_x(X_{s-},Z_{s-})\cdot\mbox{d}X_s+\int_{0+}^{\tau_n\wedge t} \mathbb{B}_z(X_{s-},Z_{s-})\cdot\mbox{d}Z_s,\\
I_2&=\int_{0+}^{\tau_n\wedge t} D^2\mathbb{B}(X_{s-},Z_{s-})d[X,Z]_s^c,\\
I_3&=\sum_{0<s\leq {\tau_n\wedge t}} \bigg[\mathbb{B}(X_s,Z_s)-\mathbb{B}(X_{s-},Z_{s-})- \mathbb{B}_x(X_{s-},Z_{s-})\Delta X_s- \mathbb{B}_z(X_{s-},Z_{s-})\Delta Z_s\bigg].
\end{align*}
Let us analyze the behavior of these terms. First, by 1$^\circ$ and the differential subordination of $Y$ to $X$, we have
$$ I_0\geq |X_0||Z_0|\geq |Y_0||Z_0|=[Y,Z]_0.$$
Next, we have $\E I_1=0$, by the properties of stochastic integrals. To deal with $I_2$, fix $0\leq s_0<s_1\leq t$. For any $\ell\geq 0$, let $(\eta_i^\ell)_{0\leq i\leq i_\ell}$ be a nondecreasing sequence of stopping times with $\eta_0^\ell=s_0,\,\eta_{i_\ell}^\ell=s_1$ such that $\lim_{\ell\to \infty}\max_{0\leq i\leq i_\ell-1}|\eta_{i+1}^\ell-\eta_i^\ell|=0$. Keeping $\ell$ fixed, we apply, for each $i=0,\,1,\,2,\,\ldots,\,i_\ell$, the estimate \eqref{bound_1} with $x=X_{s_0-}$, $z=Z_{s_0-}$ and $h=X^{c}_{\eta_{i+1}^\ell}-X^{c}_{\eta_{i}^\ell}$,  $k=Z^{c}_{ \eta_{i+1}^\ell}-Z^{c}_{\eta_{i}^\ell}$. We sum the obtained $i_\ell+1$ inequalities and let $\ell\to \infty$. As a result, we obtain the estimate
\begin{align*}
& \int_{s_0+}^{s_1}D^2\mathbb{B}(X_{s_0-},Z_{s_0-})\mbox{d}[X,Z]_s^c\\
&\geq \int_{s_0+}^{s_1}\frac{B_{xx}(X_{s_0-},Z_{s_0-})}{|B_{xz}(X_{s_0-},Z_{s_0-})|+1}\mbox{d}[X,X]^c_s+\int_{s_0+}^{s_1}\frac{B_{zz}(X_{s_0-},Z_{s_0-})}{|B_{xz}(X_{s_0-},Z_{s_0-})|+1}\mbox{d}[Z,Z]^c_s\\
&\geq \int_{s_0+}^{s_1}\frac{B_{xx}(X_{s_0-},Z_{s_0-})}{|B_{xz}(X_{s_0-},Z_{s_0-})|+1}\mbox{d}[Y,Y]^c_s+\int_{s_0+}^{s_1}\frac{B_{zz}(X_{s_0-},Z_{s_0-})}{|B_{xz}(X_{s_0-},Z_{s_0-})|+1}\mbox{d}[Z,Z]^c_s,
\end{align*}
where in the last passage we have used the differential subordination of $Y^c$ to $X^c$. By the Kunita-Watanabe inequality and the estimate \eqref{discriminant}, the latter expression is not smaller than $2 \int_{s_0+}^{s_1}d[Y,Z]^c_s=[Y,Z]^c_{s_1}-[Y,Z]^c_{s_0}$ and hence, approximating $I_2$ by Riemann sums, we obtain $I_2\geq 2[Y,Z]^c_{\tau_n\wedge t}-2[Y,Z]_0^c=2[Y,Z]^c_{\tau_n\wedge t}$. Finally, the term $I_3$ is handled with the use of \eqref{jump_control}: we get
$$ I_3\geq \sum_{0<s\leq \tau_n\wedge t}|\Delta X_s||\Delta Z_s|\geq \sum_{0<s\leq \tau_n\wedge t}|\Delta Y_s||\Delta Z_s|=\sum_{0<s\leq \tau_n\wedge t}\langle \Delta Y_s, \Delta Z_s\rangle.$$
Plugging all these observations into \eqref{ito} gives
\begin{equation}\label{dla}
 \begin{split}
\mathbb{B}(X_{\tau_n\wedge t},Z_{\tau_n\wedge t})&\geq |Y_0||Z_0|+I_1+[Y,Z]^c_{\tau_n\wedge t}+\sum_{0< s\leq \tau_n\wedge t}\langle \Delta Y_s, \Delta Z_s\rangle\\
&\geq I_1+[Y,Z]_{\tau_n\wedge t}.
\end{split}
\end{equation}
The expressions above are integrable: by 1$^\circ$ and 2$^\circ$, we have
$$ 0\leq \mathbb{B}(X_{\tau_n\wedge t},Z_{\tau_n\wedge t})\leq \frac{K|X_{\tau_n\wedge t}|^p}p+\frac{|Z_{\tau_n\wedge t}|^{p'}}{p'},$$
and the right-hand side is integrable, since $\|X\|_{L^p}<\infty$ and $\|Z\|_{L^{p'}}<\infty$ (see the beginning of the proof). Furthermore, by the very definition of $T_N$ and the differential subordination
$$|[Y,Z]_{\tau_n\wedge t}|\leq |[Y,Z]_{\tau_n\wedge t-}|+|\Delta Y_{\tau_n\wedge t}||\Delta Z_{\tau_n\wedge t}|\leq N+|\Delta X_{\tau_n\wedge t}||\Delta Z_{\tau_n\wedge t}|,$$
and the latter expression is integrable by the Young inequality and the $L^p$ / $L^{p'}$-boundedness of $X$ and $Z$. Consequently, taking the expectation in \eqref{dla}, recalling that $\E I_1=0$, and applying the majorization condition 2$^\circ$, we obtain
$$ \E [Y,Z]_{\tau_n\wedge t}\leq \frac{K\E |X_{\tau_n\wedge t}|^p}{p}+\frac{\E |Z_{\tau_n\wedge t}|^{p'}}{p'}\leq \frac{K\|X\|_{L^p}^p}{p}+\frac{\|Z\|_{L^{p'}}^{p'}}{p'},$$
or, by a simple homogenization argument,
$$ \E [Y,Z]_{\tau_n\wedge t}\leq K^{1/p}\|X\|_{L^p}\|Z\|_{L^{p'}}.$$
Letting $n\to \infty$, $t\to\infty$, $N\to \infty$ and using standard limit theorems, we get the desired estimate \eqref{mainin} (with $p^*-1$ replaced by $K^{1/p}$).
\end{proof}

As a by-product, we obtain the following interesting estimate for the joint variation of $X$ and $Z$.

\begin{remark}
For any $T>0$, let $ [[X,Z]]_T=\operatorname*{limsup} \sum_{j=1}^N |X_{t_j}-X_{t_{j-1}}||Z_{t_j}-Z_{t_{j-1}}|$, where the limit is taken over all $N$ and all partitions $0=t_0<t_1<t_2<\ldots<t_N=T$ of $[0,T]$ with diameter tending to $0$. Let $[[X,Z]]_\infty=\lim_{T\to\infty}[[X,Z]]_T$ be the total joint variation of the pair $(X,Z)$. 
The above reasoning can be easily adapted to yield the estimate
\begin{equation}\label{stronger_assertion}
 \E [[X,Z]]_\infty\leq K^{1/p}\|X\|_{L^p}\|Z\|_{L^{p'}}.
\end{equation}
Indeed, when handling the terms $I_2$ and $I_3$, skip all the arguments which involve the martingale $Y$ and the differential subordination.
\end{remark}

\textbf{An application.} Before we proceed to the construction of appropriate special functions, let us say a few words about possible applications of the estimate \eqref{stronger_assertion}. Suppose that $W=(W_t)_{t\geq 0}$ is a standard Brownian motion in $\R^d$ and let $H=(H_t)_{t\geq 0}$, $K=(K_t)_{t\geq 0}$ be two predictable processes also taking values in $\R^d$. Then the stochastic integrals
$$
X_t=\int_0^t H_s\cdot \mbox{d}W_s, \,\,\,\, Y_t=\int_0^t K_s\cdot \mbox{d}W_s,\qquad t\geq 0,
$$
are martingales and hence we have
\begin{equation}\label{integral_inequality}
 \E\int_0^{\infty} |H_s||K_s| ds \leq K^{1/p}\|X\|_{L^{p}}\|Y\|_{L^{p'}}\leq \frac{K\|X\|_{L^p}^p}{p}+\frac{\|Y\|_{L^{p'}}^{p'}}{p'}.
\end{equation}

Now, consider arbitrary sufficiently regular  functions $f\in L^p(\R^d)$ and $g\in L^{p'}(\R^d)$. Let $u_f$, $u_g$ stand for the corresponding heat extensions  to the upper half-space:  $u_f(t,x)=P_t f(x)$ and $u_g(t,x)=P_t g(x)$, where $(P_t)_{t\geq 0}$ is the semigroup with the kernel
$$ p_t(x,y)=(2\pi t)^{-d/2}\exp(-|x-y|^2/2t),\qquad x,\,y\in \R^d,\,t>0.$$
Then $u_f$, $u_g$ satisfy the heat equation in the interior the halfspace and hence for any fixed $T>0$ and $x\in \R^d$, the processes $X^{T,x}=(u_f(T-t,x+W_t))_{0\leq t\leq T}$, $Y^{T,x}=(u_g(T-t,x+W_t))_{0\leq t\leq T}$ are martingales. In addition, It\^o's formula yields the representations
$$ X_t^{T,x}=u_f(T,x)+\int_0^t \nabla_x u_f(T-s,x+W_s)\cdot \mbox{d}W_s,$$
$$Y_t^{T,x}=u_g(T,x)+\int_0^t \nabla_x u_g(T-s,x+W_s)\cdot \mbox{d}W_s.$$
Thus, an application of \eqref{integral_inequality} gives
\begin{align*}
 &\E \int_0^T |\nabla_x u_f(T-s,x+W_s)||\nabla_x u_g(T-s,x+W_s)|\mbox{d}s\\
&\qquad \qquad \leq \frac{K\E|f(x+W_T)|^p}{p}+\frac{\E|g(x+W_T)|^{p'}}{p'}.
\end{align*}
Integrating both sides over $x\in \R^d$ (with respect to the Lebesgue measure) and using Fubini's theorem, we obtain
$$ \int_0^T \int_{\R^d} |\nabla_x u_f(T-s,x)||\nabla_x u_g(T-s,x)|\mbox{d}x\mbox{d}s\leq \frac{K\|f\|_{L^p(\R^d)}^p}{p}+\frac{\|g\|_{L^{p'}(\R^d)}^{p'}}{p'}.$$
Hence, changing the variables $s:=T-s$ on the left, letting $T\to \infty$ and applying a homogenization argument, we get the Littlewood-Paley-type inequality
$$ \int_0^{\infty}\int_{\R^d} |\nabla_x u_f(x, t)||\nabla_x u_g(x, t)|dxdt\leq K^{1/p}\|f\|_{L^{p}(\R^d)}\|g\|_{L^{p'}(\R^d)}.$$
By a simple approximation argument, this extends to general $f\in L^p(\R^d)$ and $g\in L^{p'}(\R^d)$, without any additional regularity assumptions. 
A similar reasoning, which exploits the Poisson semigroup instead of $(P_t)_{t\geq 0}$ and the stopped Brownian motion in $[0,\infty)\times \R^d$ instead of $((T-t,x+B_t))_{0\leq t\leq T}$, yields the corresponding estimate
$$
\int_0^{\infty}\int_{\R^d} 2y|\nabla v_f(x, y)||\nabla v_g(x, y)|dxdy\leq  K^{1/p}\|f\|_{L^{p}(\R^d)}\|g\|_{L^{p'}(\R^d)},
$$
where $v_f$, $v_g$ denote the Poisson extensions of $f$ and $g$ to the upper halfspace.

Similar inequalities hold for other semigroups including those arising from nonlocal operators.  As an example,  consider the semigroup $(P_t)_{t\geq 0}$ arising from the process of  a L\'evy measure $\nu$ under the assumption in \cite{BaBoLu}.  Then 
\begin{eqnarray*}
\int_{\R^d} \int_0^{\infty}\int_{\R^d}|P_tf(x+y)-P_tf(x)|\,|P_tg(x+y)-P_tg(x)|\ \nu(dy)dtdx\\
\leq K^{1/p}\|f\|_{L^{p}(\R^d)}\|g\|_{L^{p'}(\R^d)}.
 \end{eqnarray*}   
For details on how the martingales arise in this case, see  \cite[pp 470-471]{BaBoLu}.

\section{Explicit special functions}

In the light of the reasoning from the previous section, the inequality \eqref{mainin} will follow if we construct a special function $B$ with $K=(p^*-1)^p$. The case $p=2$ has  already been dealt with in the introduction. We consider the cases $1<p<2$ and $p>2$ separately.

\subsection{The case $1<p<2$}

We start with the introduction of a certain auxiliary function.

\begin{lemma}\label{phi<2}
For any $s\geq 0$, there is a unique positive number $\varphi=\varphi(s)$ satisfying
\begin{equation}\label{def_phi<2}
 \varphi(s)(1+\varphi(s))^{p-2}=p^{p-2}s.
\end{equation}
The resulting function $\varphi:[0,\infty)\to [0,\infty)$ is of class $C^\infty$ and satisfies
\begin{equation}\label{diff_phi<2}
\varphi'(s)=\frac{\varphi(s)(1+\varphi(s))}{s(1+(p-1)\varphi(s))},\qquad s>0.
\end{equation}
In addition, we have $\varphi(s)\geq s^{1/(p-1)}$ for $s\leq (p-1)^{1-p}$ and $\varphi(s)<s^{1/(1-p)}$ for $s>(p-1)^{1-p}$.
\end{lemma}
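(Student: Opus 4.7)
\medskip

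\noindent\textbf{Plan of proof.}

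The plan is to treat this as a routine implicit function / monotone inversion argument. Consider the auxiliary function $F:[0,\infty)\to[0,\infty)$ defined by $F(x)=x(1+x)^{p-2}$. A direct computation gives
$$F'(x)=(1+x)^{p-3}\bigl[(1+x)+(p-2)x\bigr]=(1+x)^{p-3}\bigl[1+(p-1)x\bigr],$$
which is strictly positive for all $x\geq 0$ since $1<p<2$. Moreover $F(0)=0$ and $F(x)\sim x^{p-1}\to\infty$ as $x\to\infty$, so $F$ is a $C^\infty$ diffeomorphism of $[0,\infty)$ onto itself. Setting $\varphi(s)=F^{-1}(p^{p-2}s)$ yields the unique positive solution of \eqref{def_phi<2}, and smoothness follows from the inverse function theorem.

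For the derivative formula, I would differentiate the defining identity $\varphi(1+\varphi)^{p-2}=p^{p-2}s$ implicitly, obtaining
$$\varphi'\,(1+\varphi)^{p-3}\bigl[1+(p-1)\varphi\bigr]=p^{p-2}.$$
Isolating $\varphi'$ and then multiplying top and bottom by $\varphi(1+\varphi)^{p-2}$, one uses the defining identity once more to replace $\varphi(1+\varphi)^{p-2}/p^{p-2}$ by $s$. This collapses to exactly \eqref{diff_phi<2}.

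For the comparison with $s^{1/(p-1)}$, I would argue by reducing everything to an inequality on $\varphi$ itself. Since $p-1>0$, the inequality $\varphi(s)\geq s^{1/(p-1)}$ is equivalent to $\varphi(s)^{p-1}\geq s$. Using the defining relation to rewrite $s=\varphi(1+\varphi)^{p-2}/p^{p-2}$, this becomes $(1+\varphi)^{p-2}\leq p^{p-2}\varphi^{p-2}$, and since $p-2<0$ one flips the exponent to get the clean equivalent statement $\varphi(s)\leq 1/(p-1)$. A direct substitution into \eqref{def_phi<2} shows $\varphi(s_0)=1/(p-1)$ precisely for $s_0=(p-1)^{1-p}$, and monotonicity of $\varphi$ (which is already established) converts this threshold condition on $\varphi$ into the stated threshold condition on~$s$.

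I do not expect any serious obstacle; the only point demanding care is the sign bookkeeping when flipping inequalities under the negative exponent $p-2$, and recognising that the critical value $s_0=(p-1)^{1-p}$ in the statement is exactly the point where $\varphi$ crosses the level $1/(p-1)$.
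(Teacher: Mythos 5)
Your proposal is correct, and the first three parts (existence and uniqueness via monotonicity of $F$, smoothness via the inverse function theorem, and the derivative formula via implicit differentiation followed by substitution of the defining relation) follow exactly the paper's route. One small remark on the statement itself: the paper writes $\varphi(s)<s^{1/(1-p)}$ for $s>(p-1)^{1-p}$, which is a typo for $s^{1/(p-1)}$ --- since $1/(1-p)<0$, $s^{1/(1-p)}\to 0$ while $\varphi(s)\to\infty$, so the literal statement would be false; the version you addressed, with exponent $1/(p-1)$, is what is used later in the proof of 2$^\circ$.

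For the final comparison your route differs slightly from the paper's. The paper plugs $u=s^{1/(p-1)}$ directly into $\Phi(u)=u(1+u)^{p-2}$, bounds $\Phi(s^{1/(p-1)})\leq p^{p-2}s=\Phi(\varphi(s))$ when $s\leq(p-1)^{1-p}$ using the factor trick $1+s^{1/(p-1)}\geq ps^{1/(p-1)}$, and then invokes the monotonicity of $\Phi$ to peel off $\Phi$. You instead substitute the defining relation into the target inequality $\varphi^{p-1}\geq s$, cancel, and flip the negative exponent $p-2$ to obtain the clean equivalent threshold $\varphi(s)\leq 1/(p-1)$, then use the monotonicity of $\varphi$ itself together with the exact value $\varphi((p-1)^{1-p})=1/(p-1)$ to convert this into the stated threshold on $s$. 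Both arguments are of the same length and hinge on the same monotonicity; yours has the minor advantage of displaying the critical level $\varphi=1/(p-1)$ explicitly, which also reappears as the equality point $s_0=(p-1)^{1-p}$ in the later proof of 2$^\circ$, while the paper's has the minor advantage of never needing to invert the defining relation.
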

\begin{proof}
The existence and uniqueness of $\varphi(s)$ follows at once from the fact that the function $\Phi(u)=u(1+u)^{p-2}$ satisfies $\Phi(0)=0$ and $\Phi'(u)=(1+u)^{p-3}(1+(p-1)u)>0$ for $u>0$ and $\Phi(u)\to \infty$ as $u\to \infty$. The differentiability of $\varphi$ is an immediate consequence of standard theorems on implicit functions.  To show the identity \eqref{diff_phi<2}, it suffices to differentiate both sides of \eqref{def_phi<2} and rearrange terms. To prove the final part of the lemma, we invoke the monotonicity property of $\Phi$ described above. Namely, if $s\leq (p-1)^{1-p}$, then we have
$$ \Phi(s^{1/(p-1)})=s^{1/(p-1)}(1+s^{1/(p-1)})^{p-2}\leq s^{1/(p-1)}\big((p-1)s^{1/(p-1)}+s^{1/(p-1)}\big)^{p-2}=p^{p-2}s$$
and hence $\varphi(s)\geq s^{1/(p-1)}$. In the case $s>(p-1)^{1-p}$, we just reverse the estimates in the above reasoning.
\end{proof}

The central object, the special function $B:(0,\infty)^2\to \R$,  is defined by
$$ B(x,z)=xz\left[\frac{p-1}{p}\varphi(x^{1-p}z)+\frac{2-p}{p(p-1)}+\frac{1}{p(p-1)\varphi(x^{1-p}z)}\right].$$

\begin{theorem}
The function $B$ satisfies the conditions 1$^\circ$-4$^\circ$ listed in the previous section.
\end{theorem}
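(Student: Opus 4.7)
The approach exploits the scaling structure of $B$. Set $s = x^{1-p}z$ and $u = \varphi(s)$, so that
$$B(x,z) = xz\,F(u), \qquad F(u) = \frac{(p-1)u}{p} + \frac{2-p}{p(p-1)} + \frac{1}{p(p-1)u}.$$
Applying the chain rule together with the ODE $\varphi'(s) = u(1+u)/(s(1+(p-1)u))$ from Lemma \ref{phi<2}, and using $s_x = (1-p)s/x$, $s_z = s/z$, the first and second partials of $B$ should reduce, after sizable cancellations, to the compact formulas
$$B_z = xu,\qquad B_x = \frac{z(1+(2-p)u)}{(p-1)u},\qquad B_{xz} = \frac{(2-p)u}{1+(p-1)u}\geq 0,$$
$$B_{xx} = \frac{z(1+u)}{xu(1+(p-1)u)},\qquad B_{zz} = \frac{xu(1+u)}{z(1+(p-1)u)},\qquad |B_{xz}|+1 = \frac{1+u}{1+(p-1)u}.$$

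These expressions dispose of 3$^\circ$ and 4$^\circ$ almost by inspection. For 4$^\circ$, one checks the identity $B_{xx}B_{zz} = (1+u)^2/(1+(p-1)u)^2 = (|B_{xz}|+1)^2$, so \eqref{discriminant} holds with equality, and combined with $B_{xx}>0$ this yields \eqref{convex} as noted in Section 2. For 3$^\circ$, one of the inequalities collapses to the equality $B_{zz}/(|B_{xz}|+1) = xu/z = B_z/z$, while the other reduces to $p-1 \leq 1 + (2-p)u$, which is immediate on $1<p<2$ since the right-hand side exceeds $1 > p-1$.

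Condition 1$^\circ$ amounts to $F(u) \geq 1$ for every $u>0$. As $F$ is strictly convex ($F''(u) = 2/(p(p-1)u^3) > 0$), the minimum occurs at $u_0 = 1/(p-1)$, where $F(u_0) = 1/(p-1) \geq 1$ throughout $1<p\leq 2$. The majorization 2$^\circ$ is more delicate: substitute $z = t x^{p-1}$ (so $s = t$), which cancels all powers of $x$ and reduces \eqref{maj} to the one-variable inequality
$$g(t) := \frac{K}{p} + \frac{t^{p'}}{p'} - tF(\varphi(t)) \geq 0,\qquad K = (p-1)^{-p}.$$
The same cancellations that yielded $B_z = xu$ should give the pleasant formula $g'(t) = t^{p'-1} - \varphi(t)$. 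The last assertion of Lemma \ref{phi<2} identifies the sign change of $\varphi(t) - t^{1/(p-1)} = \varphi(t) - t^{p'-1}$ precisely at $t^* = (p-1)^{1-p}$, so $g$ is globally minimized at $t^*$. Plugging in $\varphi(t^*) = 1/(p-1)$ and $F(1/(p-1)) = 1/(p-1)$ gives $t^* F(\varphi(t^*)) = K = (t^*)^{p'}$, whence $g(t^*) = K/p + K/p' - K = 0$, proving 2$^\circ$.

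The main obstacle is the algebraic bookkeeping in the Hessian computation: identifying the clean forms $B_z = xu$ and $|B_{xz}|+1 = (1+u)/(1+(p-1)u)$ from the implicitly defined $\varphi$ requires carefully regrouping numerator terms (essentially the same regrouping shows up twice), and the formulas look coincidental until one notices that $B$ is engineered to saturate the discriminant inequality. Once these identities are in hand, the structural role of Lemma \ref{phi<2} becomes transparent: its ODE drives the derivative simplifications used in 3$^\circ$, 4$^\circ$ and in the formula for $g'$, while its final monotonicity assertion supplies the one ingredient needed for the majorization 2$^\circ$.
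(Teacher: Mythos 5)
Your proof is correct and takes essentially the same route as the paper: you compute the partial derivatives via the ODE for $\varphi$, verify that the discriminant inequality \eqref{discriminant} is saturated and that one of the two monotonicity conditions in 3$^\circ$ holds with equality, and reduce 2$^\circ$ to a one-variable minimization in $s=x^{1-p}z$ settled by the last part of Lemma \ref{phi<2}. Your organization is slightly cleaner (expressing $B_{xx},B_{zz}$ directly in $u=\varphi(s)$, and using strict convexity of $F$ for 1$^\circ$ rather than AM--GM), but these are cosmetic variants of the paper's argument.
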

\begin{proof}[Proof of 1$^\circ$] This is easy: we have $as+bs^{-1}\geq 2\sqrt{ab}$,  for any $a,\,b,\,s>0$, so
$$ \frac{p-1}{p}\varphi(x^{1-p}z)+\frac{2-p}{p(p-1)}+\frac{1}{p(p-1)\varphi(x^{1-p}z)}\geq \frac{2-p}{p(p-1)}+\frac{2}{p}=\frac{1}{p-1}\geq 1. \qedhere $$
\end{proof}
\begin{proof}[Proof of 3$^\circ$ and 4$^\circ$]
 As before, to keep the notation short, we will set $s=x^{1-p}z$. We compute directly that
\begin{align*}
&B_x(x,z)\\
&=z\left[\frac{p-1}{p}\varphi(s)+\frac{2-p}{p(p-1)}+\frac{1}{p(p-1)\varphi(s)}\right]+sz\left[-\frac{(p-1)^2}{p}\varphi'(s)+\frac{\varphi'(s)}{p\varphi^2(s)}\right].
\end{align*}
The second expression on the right-hand side equals
$$ \frac{sz\varphi'(s)}{p\varphi^2(s)}(1-(p-1)\varphi(s))(1+(p-1)\varphi(s))=\frac{z(1+\varphi(s))(1-(p-1)\varphi(s))}{p\varphi(s)},$$
where the last equality is due to \eqref{diff_phi<2}. Consequently,
$$ B_x(x,z)=\frac{(2-p)z}{p-1}+\frac{z}{(p-1)\varphi(s)}.$$
Next, we see that 
$$ B_{xx}(x,z)=\frac{x^{-p}z^2\varphi'(s)}{\varphi^2(s)}$$
and, exploiting \eqref{diff_phi<2} again we have that,
\begin{align*}
 B_{xz}(x,z)&=-\frac{p-2}{p-1}+\frac{1}{(p-1)\varphi(s)}-\frac{s\varphi'(s)}{(p-1)\varphi^2(s)}\\
 &=-\frac{p-2}{p-1}+\frac{1}{(p-1)\varphi(s)}-\frac{1+\varphi(s)}{(p-1)\varphi(s)(1+(p-1)\varphi(s))}\\
 &=-\frac{p-2}{p-1}+\frac{p-2}{(p-1)(1+(p-1)\varphi(s))}\\
 &=\frac{(2-p)\varphi(s)}{1+(p-1)\varphi(s)}.
\end{align*}
Finally, note that 
\begin{align*}
& B_z(x,z)\\
&=x\left[\frac{p-1}{p}\varphi(s)+\frac{2-p}{p(p-1)}+\frac{1}{p(p-1)\varphi(s)}\right]+x\left[\frac{(p-1)s}{p}\varphi'(s)-
\frac{s\varphi'(s)}{p(p-1)\varphi^2(s)}\right].
\end{align*}
By  \eqref{diff_phi<2}, the second term on the right-hand side equals
$$ \frac{xs\varphi'(s)}{p(p-1)\varphi^2(s)}((p-1)\varphi(s)+1)((p-1)\varphi(s)-1)=\frac{x(1+\varphi(s))((p-1)\varphi(s)-1)}{p(p-1)\varphi(s)}.$$
This, after some straightforward manipulations, yields $B_z(x,z)=x\varphi(s)$. In addition, we immediately obtain $B_{zz}(x,z)=x^{2-p}\varphi'(s)$. 

We  are now ready to check 3$^\circ$ and 4$^\circ$. Note that
\begin{align*}
 \frac{B_{xx}(x,z)}{|B_{xz}(x,z)|+1}-\frac{B_x(x,z)}{x}&=\frac{z}{x}\left[\frac{s\varphi'(s)(1+(p-1)\varphi(s))}{\varphi^2(1+\varphi)}-\frac{1+(2-p)\varphi(s)}{(p-1)\varphi(s)}\right]\\
&=\frac{(p-2)z(1+\varphi(s))}{x(p-1)\varphi(s)}\leq 0
\end{align*}
and 
$$ \frac{B_{zz}(x,z)}{|B_{xz}(x,z)|+1}-\frac{B_z(x,z)}{z}=\frac{x}{z}\left[\frac{s\varphi'(s)(1+(p-1)\varphi(s))}{1+\varphi}-\varphi(s)\right]=0.$$
Thus, 3$^\circ$ holds true. To check the concavity condition \eqref{convex}, note that $B_{xx}(x,z)\geq 0$ and hence it is enough to check the discriminant inequality \eqref{discriminant}. 
This estimate reads
$$ \frac{s^2(\varphi'(s))^2}{\varphi^2(s)}\geq \left(\frac{1+\varphi(s)}{1+(p-1)\varphi(s)}\right)^2,$$
and follows from \eqref{diff_phi<2}: actually, both sides are equal.
\end{proof}

\begin{proof}[Proof of 2$^\circ$] It remains to handle the majorization property, with $K=(p^*-1)^p=(p-1)^{-p}$.  
The claim is equivalent to
$$ \frac{p-1}{p}s\varphi(s)+\frac{2-p}{p(p-1)}s+\frac{s}{p(p-1)\varphi(s)}-\frac{(p-1)s^{p/(p-1)}}{p}\leq \frac{1}{p(p-1)^p},$$
where, as previously, $s=x^{1-p}z$. Denote the left-hand side of the above estimate by $L(s)$. Differentiating, we see that  $ L'(s)=B_z(1,s) -s^{1/(p-1)}=\varphi(s)-s^{1/(p-1)}.$ Therefore,  by the last part of Lemma  \ref{phi<2}, we conclude that $L$ attains its maximum at the point $(p-1)^{1-p}$. It remains to check that $L((p-1)^{1-p})=p^{-1}(p-1)^{-p}$.  This follows easily from the identity $\varphi((p-1)^{1-p})=s^{1/(p-1)}$, which can be verified directly in \eqref{def_phi<2}.
\end{proof}

\subsection{The case $p>2$} We proceed in a similar manner, starting with an auxiliary function.

\begin{lemma}\label{phi>2}
For any $s\geq 0$, there is a unique number $\varphi=\varphi(s)\in [p-2,\infty)$ satisfying
\begin{equation}\label{def_phi>2}
p\left(1-\frac{1}{p}\right)^{p-1}(1+\varphi(s))^{p-2}(\varphi(s)-p+2)=s.
\end{equation}
The resulting function $\varphi:[0,\infty)\to [p-2,\infty)$ is of class $C^\infty$ and satisfies
\begin{equation}\label{diff_phi>2}
\varphi'(s)=\frac{(1+\varphi(s))(\varphi(s)-p+2)}{(p-1)s(\varphi(s)-p+3)},\qquad s>0.
\end{equation}
In addition, we have $\varphi(s)\geq s^{1/(p-1)}$ for $s\leq (p-1)^{p-1}$ and $\varphi(s)<s^{1/(1-p)}$ for $s>(p-1)^{p-1}$.
\end{lemma}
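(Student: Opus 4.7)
The lemma is the $p>2$ counterpart of Lemma \ref{phi<2}, and I would follow the same three-step skeleton: existence/uniqueness of $\varphi(s)$ via the analysis of the defining map, the ODE for $\varphi$ by implicit differentiation, and the comparison with $s^{1/(p-1)}$.

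For the first step, set $\Phi(u)=p(1-1/p)^{p-1}(1+u)^{p-2}(u-p+2)$ for $u\geq p-2$. The product rule gives
$$\Phi'(u)=\frac{(p-1)^p}{p^{p-2}}(1+u)^{p-3}\bigl[(p-2)(u-p+2)+(1+u)\bigr]/(p-1)\cdot(p-1),$$
and the bracket simplifies to $(p-1)(u-p+3)$, yielding
$$\Phi'(u)=\frac{(p-1)^p}{p^{p-2}}(1+u)^{p-3}(u-p+3).$$
For $u\geq p-2$ one has $u-p+3\geq 1>0$, so $\Phi'>0$. Since $\Phi(p-2)=0$ and $\Phi(u)\to\infty$ as $u\to\infty$, $\Phi$ is a $C^\infty$ bijection $[p-2,\infty)\to[0,\infty)$, and the inverse function theorem gives a $C^\infty$ inverse $\varphi$.

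For the second step, differentiate $\Phi(\varphi(s))=s$ and substitute the expression for $\Phi'$. Multiplying numerator and denominator of $1/\Phi'(\varphi)$ by $(1+\varphi)(\varphi-p+2)$ and recognising $s=p(1-1/p)^{p-1}(1+\varphi)^{p-2}(\varphi-p+2)$ in the denominator yields the stated identity \eqref{diff_phi>2}.

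For the third step, the direct "one-line inequality" used in Lemma \ref{phi<2} does not close cleanly here from both sides because of the extra factor $(\varphi-p+2)$ in $\Phi$. I would argue instead by monotonicity: define
$$r(\varphi)=\frac{(1+\varphi)^{p-2}(\varphi-p+2)}{\varphi^{p-1}},\qquad \varphi\in(p-2,\infty),$$
and compute
$$(\log r)'(\varphi)=\frac{p-2}{1+\varphi}+\frac{1}{\varphi-p+2}-\frac{p-1}{\varphi}.$$
Putting the three fractions over the common denominator $\varphi(1+\varphi)(\varphi-p+2)$, both the $\varphi^2$- and $\varphi^1$-coefficients in the numerator cancel, leaving the constant $(p-1)(p-2)>0$. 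Thus $r$ is strictly increasing, hence so is $\Phi(\varphi)/\varphi^{p-1}=\tfrac{(p-1)^{p-1}}{p^{p-2}}r(\varphi)$; a direct evaluation shows it equals $1$ at $\varphi=p-1$. Consequently $\Phi(\varphi)\lessgtr\varphi^{p-1}$ iff $\varphi\lessgtr p-1$, which via $\varphi=\varphi(s)$ reads $s\lessgtr(p-1)^{p-1}$ and is equivalent to $\varphi(s)\gtrless s^{1/(p-1)}$. The degenerate subcase $s^{1/(p-1)}<p-2$ is automatic from $\varphi(s)\geq p-2$.

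The main obstacle is the third step: one has to identify the correct auxiliary ratio $r$ (motivated by asking exactly when $\Phi(\varphi)=\varphi^{p-1}$), and then verify the nontrivial cancellation in $(\log r)'$ that makes the sign manifest. The first two steps are routine once the factorisation $(p-2)(u-p+2)+(1+u)=(p-1)(u-p+3)$ is in hand.
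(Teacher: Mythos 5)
Your proof is correct, and since the paper states Lemma \ref{phi>2} without proof (evidently because it is meant to be the $p>2$ analogue of Lemma \ref{phi<2}), there is no in-text argument to compare against word for word. Steps 1 and 2 mirror the paper's Lemma \ref{phi<2} exactly: the factorisation $(p-2)(u-p+2)+(1+u)=(p-1)(u-p+3)$ is right, $\Phi'(u)=\frac{(p-1)^p}{p^{p-2}}(1+u)^{p-3}(u-p+3)>0$ on $[p-2,\infty)$, and substituting $s=\Phi(\varphi)$ into $\varphi'=1/\Phi'(\varphi)$ recovers \eqref{diff_phi>2}. (The displayed line for $\Phi'(u)$ with the dangling $/(p-1)\cdot(p-1)$ is a notational stumble, but the final formula is correct.) Note also that the lemma's stated exponent $s^{1/(1-p)}$ is a typo carried over from Lemma \ref{phi<2}; the quantity actually used in the proof of 2$^\circ$ is $s^{1/(p-1)}$, which is what you prove.

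For Step 3 you replace the one-line estimate of Lemma \ref{phi<2} with a monotonicity argument for $r(\varphi)=(1+\varphi)^{p-2}(\varphi-p+2)/\varphi^{p-1}$; the computation $(\log r)'(\varphi)=\frac{(p-1)(p-2)}{\varphi(1+\varphi)(\varphi-p+2)}>0$ is correct (the $\varphi^2$ and $\varphi^1$ terms do cancel), as is $r(p-1)=p^{p-2}/(p-1)^{p-1}$, and the chain $s\lessgtr(p-1)^{p-1}\Leftrightarrow\varphi(s)\lessgtr p-1\Leftrightarrow\Phi(\varphi(s))\lessgtr\varphi(s)^{p-1}\Leftrightarrow\varphi(s)\gtrless s^{1/(p-1)}$ closes the argument. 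This is a genuinely different route from the template in Lemma \ref{phi<2}, and it is perhaps worth recording that the direct route does work here too if one splits the constant correctly: writing
\begin{equation*}
\Phi(u)=\left(\frac{(p-1)(1+u)}{p}\right)^{p-2}\cdot(p-1)(u-p+2)
\end{equation*}
and applying weighted AM--GM with weights $\frac{p-2}{p-1}$ and $\frac{1}{p-1}$ gives $\Phi(u)\le\bigl(\tfrac{(p-1)(2u-p+2)}{p}\bigr)^{p-1}$, and the remaining inequality $\tfrac{(p-1)(2u-p+2)}{p}\le u$ is equivalent to $u\le p-1$, with both steps tight at $u=p-1$. Either argument is fine; yours trades the need to guess that particular rewriting for a slightly longer but mechanical derivative computation.
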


The special function $B$ is given by
$$ B(x,z)=\left(1-\frac{1}{p}\right)xz\left[\varphi(x^{1-p}z)+\frac{1}{\varphi(x^{1-p}z)-p+2}\right].$$

In the light of the previous section, the inequality \eqref{mainin} will follow once we show the following.
\begin{theorem}
The function $B$ satisfies the conditions 1$^\circ$-4$^\circ$.
\end{theorem}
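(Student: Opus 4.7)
I plan to verify the four conditions in the same order and with the same technique as in the $1<p<2$ case, leveraging the ODE \eqref{diff_phi>2} to collapse expressions involving $\varphi$ and $\varphi'$.

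\textbf{Condition $1^\circ$.} Setting $u=\varphi(s)-p+2\ge 0$ with $s=x^{1-p}z$, the bracket in the definition of $B$ equals $u+(p-2)+u^{-1}\ge (p-2)+2=p$ by AM--GM. Hence $B(x,z)\ge (1-1/p)\cdot p\cdot xz=(p-1)xz\ge xz$, since $p>2$.

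\textbf{Conditions $3^\circ$ and $4^\circ$.} Direct differentiation combined with repeated use of \eqref{diff_phi>2} should yield the clean identities
$$B_z(x,z)=x\varphi(s),\qquad B_{xz}(x,z)=\frac{p-2}{\varphi(s)-p+3},\qquad B_x(x,z)=\frac{(p-1)z}{\varphi(s)-p+2},$$
together with $B_{zz}(x,z)=x^{2-p}\varphi'(s)$ and $B_{xx}(x,z)=(p-1)z(1+\varphi(s))/[x(\varphi(s)-p+2)(\varphi(s)-p+3)]$. From these, $|B_{xz}|+1=(1+\varphi)/(\varphi-p+3)$, so the first inequality of \eqref{monotonicity} becomes the \emph{equality} $B_{xx}/(|B_{xz}|+1)=B_x/x$, while the second reduces to $\varphi\ge(\varphi-p+2)/(p-1)$, equivalent to $(p-2)(\varphi+1)\ge 0$. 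For $4^\circ$, the discriminant identity $B_{xx}B_{zz}=(1+\varphi)^2/(\varphi-p+3)^2=(|B_{xz}|+1)^2$ (again a consequence of \eqref{diff_phi>2}) together with $B_{xx}\ge 0$ gives \eqref{discriminant}, hence \eqref{convex}.

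\textbf{Condition $2^\circ$.} By homogeneity, and taking $K=(p^*-1)^p=(p-1)^p$, it suffices to show
$$L(s):=B(1,s)-\frac{s^{p'}}{p'}\le \frac{(p-1)^p}{p},\qquad s>0.$$
Using $B_z(1,s)=\varphi(s)$ from the previous step gives $L'(s)=\varphi(s)-s^{1/(p-1)}$, and the last part of Lemma \ref{phi>2} then yields $L'\ge 0$ on $(0,(p-1)^{p-1}]$ and $L'\le 0$ on $[(p-1)^{p-1},\infty)$. Thus $L$ attains its maximum at $s_0=(p-1)^{p-1}$. Since $\varphi(s_0)=p-1$ is verified directly in \eqref{def_phi>2}, a short substitution produces $L(s_0)=(p-1)^p/p$, as required.

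The main obstacle is the algebraic bookkeeping in the second step: the derivatives of $B$ involve $\varphi$ and $\varphi'$ entangled with rational expressions in $\varphi-p+2$ and $\varphi-p+3$, and one must repeatedly invoke \eqref{diff_phi>2} to obtain the cancellations. The function $\varphi$ is engineered so that both the $x$-inequality of $3^\circ$ and the discriminant bound \eqref{discriminant} hold as equalities; this tight fit is what produces the sharp constant $(p-1)^p$ and reflects the fact that, via the matching prefactor $p(1-1/p)^{p-1}$, the present construction is the dual counterpart of Burkholder's function \eqref{function_U}.
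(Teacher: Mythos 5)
Your proposal is correct and follows exactly the paper's line of argument: AM--GM for $1^\circ$; computation of $B_x$, $B_z$, $B_{xx}$, $B_{xz}$, $B_{zz}$ via repeated use of \eqref{diff_phi>2}, yielding equality in the $x$-inequality of \eqref{monotonicity} and the degenerate discriminant identity \eqref{discriminant} for $3^\circ$ and $4^\circ$; and reduction to the one-variable function $L(s)=B(1,s)-s^{p'}/p'$ with $L'(s)=\varphi(s)-s^{1/(p-1)}$ for $2^\circ$. The only cosmetic differences are that you present $B_{xx}$ already with $\varphi'$ eliminated via \eqref{diff_phi>2} rather than leaving it in the raw form $(p-1)^2 x^{-p}z^2\varphi'(s)/(\varphi(s)-p+2)^2$, and that you invoke homogeneity explicitly where the paper divides by $x^p$ implicitly.
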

\begin{proof}[Proof of 1$^\circ$]
We have $s+s^{-1}\geq 2$ for any $s>0$, so
$$ \left(1-\frac{1}{p}\right)\left[\varphi(x^{1-p}z)+\frac{1}{\varphi(x^{1-p}z)-p+2}\right]\geq \left(1-\frac{1}{p}\right)(2+p-2)=p-1\geq 1. \qedhere$$
\end{proof}

\begin{proof}[Proof of 3$^\circ$ and 4$^\circ$]
The calculation are similar to those in the case $1<p<2$. A direct differentiation combined with \eqref{diff_phi>2} yields
\begin{align*}
B_x(x,z)&=\!\left(1-\frac{1}{p}\right)z\!\left[\varphi(s)+\frac{1}{\varphi(s)-p+2}\right]\!-\frac{(p-1)^2}{p}sz\left[\varphi'(s)-\frac{\varphi'(s)}{(\varphi(s)-p+2)^2}\right]\\
&=\left(1-\frac{1}{p}\right)z\left[\varphi(s)+\frac{1}{\varphi(s)-p+2}\right]-\frac{(p-1)z(1+\varphi(s))(\varphi(s)-p+1)}{p(\varphi(s)-p+2)}\\
&=\frac{(p-1)z}{\varphi(s)-p+2}.
\end{align*}
Therefore,
$$ B_{xx}(x,z)=\frac{(p-1)^2 x^{-p}z^2\varphi'(s)}{(\varphi(s)-p+2)^2}$$
and, by \eqref{diff_phi>2},
\begin{align*}
 B_{xz}(x,z)&=\frac{p-1}{\varphi(s)-p+2}-\frac{(p-1)s\varphi'(s)}{(\varphi(s)-p+2}\\
 &=\frac{p-1}{\varphi(s)-p+2}-\frac{1+\varphi(s)}{(\varphi(s)-p+2)(\varphi(s)-p+3)}=\frac{p-2}{\varphi(s)-p+3}.
\end{align*}
Moreover, arguing as above, we check that $B_z(x,z)$ equals
\begin{align*}
\left(1-\frac{1}{p}\right)x\left[\varphi(s)+\frac{1}{\varphi(s)-p+2}\right]+\left(1-\frac{1}{p}\right)sx\left[\varphi'(s)-\frac{\varphi'(s)}{(\varphi(s)-p+2)^2}\right]=x\varphi(s)
\end{align*}
and hence $B_{zz}(x,z)=x^{2-p}\varphi'(s)$. 
Now we can establish 3$^\circ$.  We have
$$ \frac{B_{xx}(x,z)}{|B_{xz}(x,z)|+1}-\frac{B_x(x,z)}{x}\!=\!\frac{(p-1)z}{x(\varphi(s)-p+2)}\left[\frac{(p-1)s\varphi'(s)(\varphi(s)-p+3)}{(\varphi(s)-p+2)(1+\varphi(s))}-1\right]=0$$
and
$$ \frac{B_{zz}(x,z)}{|B_{xz}(x,z)|+1}-\frac{B_z(x,z)}{z}=\frac{(2-p)x(\varphi(s)+1)}{(p-1)z}\leq 0.$$
To check the concavity 4$^\circ$, it is enough to verify the validity of \eqref{discriminant} (since $B_{xx}(x,z)\geq 0$). However, we have
$$ B_{xx}(x,z)B_{zz}(x,z)=\left(\frac{(p-1)s\varphi'(s)}{\varphi(s)-p+2}\right)^2,$$
which, by \eqref{diff_phi>2}, is equal to $\displaystyle \left(\frac{1+\varphi(s)}{\varphi(s)-p+2}\right)^2=(|B_{xz}(x,z)|+1)^2$. 
\end{proof}

\begin{proof}[Proof of 2$^\circ$]
We proceed as in the previous case. We let $K=(p^*-1)^p=(p-1)^p$ and note that the majorization is equivalent to
$$ \left(1-\frac{1}{p}\right)s\left[\varphi(s)+\frac{1}{\varphi(s)-p+2}\right]-\frac{p-1}{p}s^{p/(p-1)}\leq \frac{(p-1)^p}{p}.$$
Denoting the left-hand side by $L(s)$, we compute that $ L'(s)=B_z(1,s)-s^{1/(p-1)}=\varphi(s)-s^{1/(p-1)}$. By the last part of Lemma \ref{phi>2}, $L$ attains its maximal value at $s=(p-1)^{p-1}$. Since $\varphi((p-1)^{p-1})=p-1$ (directly from \eqref{def_phi>2}), we check that $L((p-1)^{p-1})=(p-1)^p/p$, which establishes the desired majorization.
\end{proof}

\section{On the search of the Bellman function $B$}

Now we will sketch some informal steps which lead to the discovery of the special functions $B$ (and the optimal constant $K=(p^*-1)^p$); the reasoning will be based on a number of assumptions and guesses. A typical approach during the search for the Bellman function is to look  at the concavity condition and assume its degeneracy. This usually gives rise to a corresponding second order partial differential equation. Next, one exploits structural properties of a general solution of the equation and from this one aims to come up with a reasonable candidate for the special function. 
We consider the cases $1<p<2$ and $p>2$ separately.

\subsection{The case $1<p<2$} It is convenient to split the argumentation into a few steps.

\smallskip

\emph{Step 1. Additional assumptions}. We will impose a few extra assumptions on the function $B$. First, we guess that the partial derivative $B_{xz}$ is \emph{nonnegative} on the whole $(0,\infty)^2$. Our second assumption is that $B$ can be extended to a function $\tilde{B}:\R^2\to \R$ satisfying $\tilde{B}(x,z)=\tilde{B}(\pm x,\pm z)$. Then, in particular, $\tilde B$ must satisfy 
\begin{equation}\label{diff_B}
\tilde B_z(x,0)=0.
\end{equation} 

\smallskip

\emph{Step 2. The Monge-Amp\`ere equation}.  In our case, the concavity is governed by the inequality \eqref{discriminant}. Setting $C(x,z)=\tilde{B}(x,z)+xz$ and recalling the assumption $B_{xz}>0$ for $x,\,z>0$, we see that the condition degenerates if and only if $C$ satisfies the so-called Monge-Amp\`ere equation
$$ C_{xx}(x,z)C_{zz}(x,z)=(C_{xz}(x,z))^2.$$
From the general theory of such equations, we infer that the quadrant $(0,\infty)^2$ can be foliated, i.e., split into a union of pairwise disjoint line segments along which $C$ is linear and the first-order partial derivatives of $C$ are constant. In what follows, we will assume that these segments have negative slope;  see Figure \ref{foliation} below. 

\begin{figure}[htbp]
\begin{center}
\includegraphics[scale=0.7]{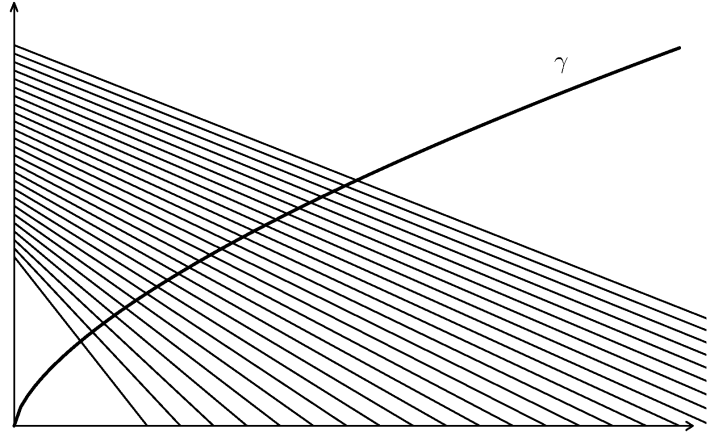}
\caption{
The foliation of $(0,\infty)^2$}\label{foliation}
\end{center}
\end{figure} 

Next, by 2$^\circ$, we  have
\begin{equation}\label{majC}
 C(x,z)\leq xz+\frac{K x^p}{p}+\frac{z^{p'}}{p'}.
\end{equation}
Note that the right-hand side enjoys the following homogeneity property: if we fix $\lambda>0$ and  multiply $x$ by $\lambda^{1/p}$ and $z$ by $\lambda^{1/p'}$, then the whole expression is multiplied by $\lambda$. It seems plausible to assume that the same is true for the left-hand side: $C(\lambda^{1/p}x,\lambda^{1/p'}z)=\lambda C(x,z)$. Finally, if $K$ is the optimal constant, then there should be a nonzero point $(x_0,z_0)$ at which both sides of \eqref{majC} are equal; by the aforementioned homogeneity, such point gives rise to the whole `equality curve' $\gamma=\{(x,z):z=s_0x^{p-1}\}$, where $s_0=z_0x_0^{1-p}$. Note that by \eqref{majC}, the first-order partial derivatives of the functions $C$  and $(x,z)\mapsto xz+{K x^p}/{p}+{z^{p'}}/{p'}$ must match at each point from $\gamma$.

\smallskip

\emph{Step 3. The formula for $C$.} Pick a point $(x,s_0x^{p-1})$ from the equality curve $\gamma$. Let $I$ be the line segment of the foliation passing through this point. If $\alpha=\alpha(x)$ is the slope of this segment, we may write
$$ C(x+d,s_0x^{p-1}+\alpha d)=C(x,s_0x^{p-1})+C_x(x,s_0x^{p-1})d+C_z(x,s_0x^{p-1})\alpha d.$$
But $C(x,s_0x^{p-1})=x^p(s_0+K/p+s_0^{p'}/p')$, since $(x,s_0x^{p-1})\in \gamma$. Moreover, from the last sentence of the previous step, we know that $C_x(x,s_0x^{p-1})=(K+s_0)x^{p-1}$ and $C_z(x,s_0x^{p-1})=x+(s_0x^{p-1})^{p'-1}=x(1+s_0^{1/(p-1)})$. Furthermore, recall that $C_z$ is constant along $I$ and, by \eqref{diff_B}, $ C_z(x,0)=x$. Comparing the latter two expressions for $C_z$, we obtain that $I$ intersects the $x$ axis at the point $(x(1+s_0^{1/(p-1)}),0)$ and hence $\alpha=-s_0^{(p-2)/(p-1)}x^{p-2}$. Putting all the above facts together and calculating a little bit, we get the following explicit (or rather implicit) formula for $C$:
$$ C(x+d,s_0x^{p-1}-s_0^{(p-2)/(p-1)}x^{p-2}d)\!=\!\left(s_0+\frac{K}{p}+\frac{s_0^{p'}}{p'}\right)x^p+(K-s_0^{(p-2)/(p-1)})x^{p-1}d.$$
It remains to guess $K$ and $s_0$. To this end, plug $d=s_0^{1/(p-1)}x$ to obtain
$$ C\big(x(1+s_0^{1/(p-1)}),0\big)=\left(\frac{K}{p}+\frac{s_0^{p'}}{p'}+Ks_0^{1/(p-1)}\right)x^p$$
and hence, differentiating both sides with respect to $x$,
$$ (1+s_0^{1/(p-1)})C_x\big(x(1+s_0^{1/(p-1)}),0\big)=p\left(\frac{K}{p}+\frac{s_0^{p'}}{p'}+Ks_0^{1/(p-1)}\right)x^{p-1}.$$
But $C_x\big(x(1+s_0^{1/(p-1)}),0\big)=C_x(x,s_0x^{p-1})=(K+s_0)x^{p-1}$ (again, by the constancy of partial derivatives along the segments of foliation). Combining the last two observations, we get the equation
$$ p\left(\frac{K}{p}+\frac{s_0^{p'}}{p'}+Ks_0^{1/(p-1)}\right)=(1+s_0^{1/(p-1)})(K+s_0),$$
or, equivalently,
$$ K=\frac{s_0^{(p-2)/(p-1)}+(2-p)s_0}{p-1}.$$
The right-hand side, considered as a function of $s_0$, attains its minimum $(p-1)^{-p}$ at the point $(p-1)^{1-p}$. Thus, it is natural to set $K=(p-1)^{-p}$ and $s_0=(p-1)^{1-p}$ as these extremal values. This leads to the function $B$ studied in the previous section, but to check this, one has to carry out some lengthy calculations. We will give a brief sketch. The above analysis gives the formula for $C$, given implicitly as
$$ C\left(x+d,\left(\frac{x}{p-1}\right)^{p-1}-\left(\frac{x}{p-1}\right)^{p-2}d\right)=p(p-1)^{-p}x^p+p(p-1)^{-p}(2-p)x^{p-1}d.$$
Set $X=x+d$ and $Z=\left(\frac{x}{p-1}\right)^{p-1}-\left(\frac{x}{p-1}\right)^{p-2}d$. One checks directly by \eqref{def_phi<2} that 
$$ \varphi(X^{1-p}Z)=\frac{x+d-pd}{(x+d)(p-1)}.$$
Furthermore, calculating a little bit, we get
\begin{align*}
C(X,Z)\!-\!XZ=\!XZ\!\left[\frac{p-1}{p}\cdot \frac{x+d-pd}{(x+d)(p-1)}+\frac{2-p}{p(p-1)}+\frac{1}{p(p-1)}\frac{(x+d)(p-1)}{x+d-pd}\right],
\end{align*}
and it remains to note that the right-hand side is $B(X,Z)$.

\subsection{The case $p>2$} The reasoning is similar to that in the previous case, so we will be brief. We start with some additional assumptions on the partial derivatives of $B$.  As previously, we work under the condition $B_{xz}\geq 0$ on $(0,\infty)^2$. Furthermore, we impose the vanishing requirement on one of the first-order derivatives: in contrast to the case $1<p<2$, now we assume that $B_x$ is zero on the $z$ axis: $B_x(0,z)=0$. Next, we consider the function $C(x,z)=\tilde{B}(x,z)+xz$ and note that the degeneration of \eqref{discriminant} can be rewritten as the Monge-Ampere equation $ C_{xx}(x,z)C_{zz}(x,z)=(C_{xz}(x,z))^2.$ We assume that the foliation is of the same shape as in the case $1<p<2$; see Figure \ref{foliation}. Next we repeat, word by word, the analysis which leads to the equality curve $\gamma$ and  due to the assumption that $p>2$, this time it is a graph of a \emph{convex} function.

Some substantial differences occur when we turn to the identification of the explicit formula for $C$. As before,  we fix a point $(x,s_0x^{p-1})\in \gamma$, denote by $\alpha$ the slope of the corresponding leaf of foliation and write 
\begin{equation}\label{form_C}
 C(x+d,s_0x^{p-1}+\alpha d)=C(x,s_0x^{p-1})+C_x(x,s_0x^{p-1})d+C_z(x,s_0x^{p-1})\alpha d.
\end{equation}
Now, we have $C(x,s_0x^{p-1})=x^p(s_0+K/p+s_0^{p'}/p')$ (because $(x,s_0x^{p-1})\in \gamma$) and  $C_x(x,s_0x^{p-1})=(K+s_0)x^{p-1}$ and $C_z(x,s_0x^{p-1})=x+(s_0x^{p-1})^{p'-1}=x(1+s_0^{1/(p-1)})$. Now, the assumption $B_x(0,z)=0$ implies $C_x(0,z)=z$. Since $C_x$ is constant along the leaves of foliation, if we compare this to $C_x(x,s_0x^{p-1})$, we get that the line segment $I$ intersects the $z$ axis at the point $(0,(K+s_0)x^{p-1})$. Therefore, we have $\alpha=-Kx^{p-2}$.  Plugging all this information into \eqref{form_C} gives
$$C(x+d,s_0x^{p-1}-Kx^{p-2}d)=\left(\frac{K}{p}+\frac{s_0^{p'}}{p'}+s_0\right)x^p-(Ks_0^{1/(p-1)}-s_0)x^{p-1}d.$$
To guess $K$ and $s_0$, we compute the derivative $C_z$ along the $z$ axis. By the above formula for $C$, we have $ C(0,(K+s_0)x^{p-1})=\left(\frac{K}{p}+\frac{s_0^{p'}}{p'}+Ks_0^{1/(p-1)}\right)x^p$ 
and hence
$$ (K+s_0)C_z(0,(K+s_0)x^{p-1})=\frac{p}{p-1}\left(\frac{K}{p}+\frac{s_0^{p'}}{p'}+Ks_0^{1/(p-1)}\right)x.$$
On the other hand, $C_z$ is constant along $I$, so $C_z(0,(K+s_0)x^{p-1})=C_z(x,s_0x^{p-1})=x(1+s_0^{1/(p-1)})$. Combining this with the previous equation yields an identity which is equivalent to
$$ K=\frac{s_0(p-1)}{s_0^{1/(p-1)}-p+2}.$$
The right-hand side, considered as a function of $s_0\in (p-2,\infty)$, attains its minimal value $(p-1)^p$ at the point $(p-1)^{p-1}$. Setting $K=(p-1)^p$, $s_0=(p-1)^{p-1}$, once can check that the function $(x,z)\mapsto C(x,z)-xz$ is the special function we used in the Section 3. Indeed,  substituting the above values of $K$ and $s_0$ into the formula for $C$, we get
$$ C\Big(x+d,((p-1)x)^{p-1}-(p-1)^2((p-1)x)^{p-2}d\Big)=p'((p-1)x)^p-p(p-2)((p-1)x)^{p-1}d.$$
If we set $X=x+d$ and $Z=((p-1)x)^{p-1}-(p-1)^2((p-1)x)^{p-2}d$, we check directly from \eqref{def_phi>2} that  $ \varphi(X^{1-p}Z)=(px-x-d)/(x+d)$ and
$$ C(X,Z)-XZ=\left(1-\frac{1}{p}\right)XZ\left[\frac{px-x-d}{x+d}+\frac{1}{\frac{px-x-d}{x+d}-p+2}\right].$$
The right-hand side is precisely $B(X,Z)$ and this  shows that the function we discovered coincides with that used in Section 3.

\end{document}